\setlist[enumerate,1]{label=\textup{(\arabic*)}}% ensure that enumerations in theorems are upright
\tikzset{cd/.style=matrix of math nodes,row sep=2em,column sep=2em, text height=1.5ex, text depth=0.5ex}
\tikzset{cdar/.style=->,auto}
\tikzset{overar/.style={draw=white,double=black,double distance=.4pt,very thick}}
\renewcommand{\PrintDOI}[1]{\href{http://dx.doi.org/\detokenize{#1}}{doi: \detokenize{#1}}}
\numberwithin{equation}{section}
\theoremstyle{plain}
\newtheorem{theorem}[equation]{Theorem}
\newtheorem{proposition}[equation]{Proposition}
\theoremstyle{definition}
\theoremstyle{remark}
\newtheorem{remark}[equation]{Remark}
\newcommand{\tenscorep}{\mathbin{\begin{tikzpicture}[baseline,x=.75ex,y=.75ex] \draw[line width=.2pt] (-0.8,1.15)--(0.8,1.15);\draw[line width=.2pt](0,-0.25)--(0,1.15); \draw[line width=.2pt] (0,0.75) circle [radius = 1];\end{tikzpicture}}}
\newcommand*{\Braiding}[2]{\begin{tikzpicture}[baseline]
    \draw[-] (0,0) -- (1.4ex,1.4ex) node[right,inner sep=0pt] {$\scriptstyle #2$};
    \draw[-,draw=white,line width=2.4pt] (0,1.4ex) -- (1.4ex,0);
    \draw[-] (1.4ex,0) -- (0,1.4ex) node[left,inner sep=0pt] {$\scriptstyle #1$};
  \end{tikzpicture}}
\newcommand*{\Dualbraiding}[2]{\begin{tikzpicture}[baseline]
    \draw[-] (1.4ex,0) -- (0,1.4ex) node[left,inner sep=0pt] {$\scriptstyle #1$};
    \draw[-,draw=white,line width=2.4pt] (0,0) -- (1.4ex,1.4ex);
    \draw[-] (0,0) -- (1.4ex,1.4ex) node[right,inner sep=0pt] {$\scriptstyle #2$};
  \end{tikzpicture}}
\newcommand*{\corep}[1]{\textup{#1}}          %Corepresentation of a quantum group
\newcommand*{\Corep}[1]{\mathbb{#1}}          %Corepresentation as operator on Hilbert space
\newcommand*{\DuCorep}[1]{\hat{\Corep{#1}}}   %Dual corepresentation viewed as operator on Hilbert space
\newcommand*{\dom}{\mathcal D}% domain of an unbounded linear map
\newcommand*{\Sp}{\text{Sp}}%Spectrum
\newcommand*{\nb}{\nobreakdash}
\newcommand*{\Star}{$^*$\nb-}
\newcommand*{\C}{\mathbb C}
\newcommand*{\Z}{\mathbb Z}
\newcommand*{\T}{\mathbb T}
\newcommand*{\G}[1][G]{\mathbb #1}%quantum group
\newcommand*{\DuG}[1][G]{\widehat{\mathbb{#1}}}
\newcommand*{\qE}{\textup{E}_\textup {q}(2)}
\newcommand*{\Comult}[1][]{\Delta_{#1}}%comultiplication
\newcommand*{\DuComult}[1][]{{\Delta}_{\widehat{#1}}}%dualcomultiplication
\newcommand*{\Mod}[1]{\abs{#1}}% modulus of polar decomposition
\newcommand*{\Ph}[1]{\Phi_{#1}}% phase of polar decomposition
\newcommand*{\Bound}{\mathbb B}%adjointable operators on a Hilbert module
\newcommand*{\Comp}{\mathbb K}%compact operators on a Hilbert module
\newcommand*{\transpose}{\mathsf T}%transpose
\newcommand*{\Contvin}{\textup C_0}%continuous functions vanishing at infinity
\newcommand*{\Cont}{\textup C}%continuous functions
\newcommand*{\QuantExp}[1][]{F_{|q|}{#1}}%Woronowicz's quantum exponential function
\newcommand*{\Mor}{\textup{Mor}}%nondegenerate *-homomorphisms of C*-algebras
\newcommand*{\Id}{\textup{id}}%identity map
\newcommand*{\Multunit}[1][]{\mathbb V}%muliplicative unitary on action on a hilbert space
\newcommand*{\BrMultunit}{\mathbb F}%braided muliplicative unitary on action on a hilbert space
\newcommand*{\DuBrMultunit}{\widehat{\BrMultunit}}% dual of a braided multiplicative unitary
 \newcommand*{\dual}[1]{\widehat{#1}}
\newcommand*{\Rmat}{\textup R}%R-matrix
\newcommand*{\DuRmat}{\widehat{\textup R}}%Dual R-matrix
\newcommand*{\Rmattxt}{R}%R-matrix for text use
\newcommand*{\Flip}{\Sigma}% flip operator on Hilbert space
\newcommand*{\flip}{\sigma}% flip map on the multiplier algebra
\newcommand*{\Cst}{\textup C^*}%C*-algebra
\newcommand*{\Cstcat}{\mathfrak{C^*alg}}%category of C*-algebras
\newcommand*{\Corepcat}{\mathfrak{Rep}}
\newcommand*{\Hils}[1][H]{\mathcal{#1}}%Hilbert space
\newcommand*{\Mult}{\mathcal M}%multiplier algebra
\newcommand*{\U}{\mathcal U}%unitary group
\newcommand*{\defeq}{\mathrel{\vcentcolon=}}
\newcommand*{\abs}[1]{\lvert#1\rvert}
\newcommand*{\conj}[1]{\overline{#1}}
\newcommand*{\cl}[1]{\overline{#1}}
\newcommand*{\Specn}{\cl{\C}^{|q|}}
\newcommand*{\Aff}{\mathrel{\eta}}
\DeclareMathOperator{\Aut}{Aut}% automorphism group of a C*-algebra
\DeclareMathOperator{\Hom}{Hom}% hom
\begin{document}

\title[Dual braided quantum \(\textup{E}(2)\) groups]{Dual braided quantum \(\textup{E}(2)\) groups}

\author{Atibur Rahaman}
\email{atibur.pdf@iiserkol.ac.in}
\address{
 Department of Mathematics and Statistics, Indian Institute of Science Education and Research Kolkata, Mohanpur - 741246, West Bengal, India.}

\begin{abstract}
 	An explicit construction of the braided dual of quantum \(\textup{E}(2)\) groups is described 
	over the circle group \(\T\) with respect to a specific \(\Rmattxt\)\nb-matrix \(\Rmat\). 
	Furthermore, the corresponding bosonization is also described.
\end{abstract}

\subjclass[2020]{Primary: 46L89; secondary: 81R50, 18M15}
\keywords{braided multiplicative unitary, braided quantum E(2) group, dual braided quantum group, quasitriangular quantum group, R-matrix, bosonization}
%\thanks{The author was supported by a Senior Project Associate fellowship under the DST-INSPIRE faculty award program, India. 
%}

\maketitle

\section{Introduction}
  \label{sec:introduction}
	
%	Woronowicz constructed quantum versions of the affine group E(2) for real de- formation 
%	parameters as locally compact quantum groups, generated by an explicit multiplicative unitary. 
%	These were recently extended to complex deformation pa- rameters. 
%	Then they are no longer genuine quantum groups, but “braided” quan- tum groups. 
%	This means that the comultiplication takes values in a noncommuta- tive tensor product 
%	where the two tensor factors no longer commute. This tensor product is defined using a 
%	braiding on the category of representations of the cir- cle group. The usual quantum 
%	E(2)-groups are well known to be selfdual, that is, the dual quantum group is again 
%	isomorphic to quantum E(2)-group. Here an analogous result is proven for the more 
%	general braided quantum E(2)-groups. To achieve this, these duals are constructed 
%	very explicitly, as generated by certain unbounded operators on a Hilbert space. 
%	The bosonization of a braided quantum group is a genuine quantum group with 
%	some extra structure that makes it look a bit like a semidirect product. 
%	This bosonization is described explicitly for the dual of quantum E(2) as well.
%	
%	\Atiburcomment{Our list of non\nb-compact locally compact quantum groups is still far too short. 
%}
	
  	Along with a clear formula for the bicharacter characterising the duality of quantum groups 
	in the operator algebraic context, a beautiful duality for the quantum \(\textup{E}(2)\) group for 
	non\nb-zero real deformation parameters was discovered in~\cite{W1991a}. 
	It was shown in~\cite{VW1996} that the bidual of quantum \(\textup{E}(2)\) group is 
	isomorphic to quantum \(\textup{E}(2)\) group itself. In this article we 
	expand this duality to accomodate the complex values of the deformation parameter. 
	
	The \(q\)-deformations of the (double cover of) \(\textup{E}(2)\) group, which Woronowicz 
	first described in~\cites{W1991b} for real deformation parameters, are classic examples of 
	locally compact quantum groups. These deformations were then extended in~\cite{RR2021} 
	for complex values of the deformation parameter \(q\) to produce braided locally compact 
	quantum \(\textup{E}(2)\) groups over the circle group \(\T\), viewed as a quasitriangular 
	quantum group with respect to a certain \(\Rmattxt\)\nb-matrix \(\Rmat\). 
	
	The braiding unitary was considered in the representation category of the above mentioned quasitriangular 
	quantum group \(\T\). Subsequently the braiding unitary induces a twisted monoidal structure on the 
	category~\(\Cstcat(\T)\), whose  objects are \(\Cst\)\nb-algebras with an action of \(\T\), 
	called \(\T\)\nb-\(\Cst\)\nb-algebras, and the morphisms 
	are the \(\T\)\nb-equivariant morphisms between two \(\T\)\nb-\(\Cst\)\nb-algebras. This category plays a 
	pivotal role in the construction  of both braided~\(\qE\) and its dual, henceforth denoted 
	by~\(\dual{\qE}\). However, unlike the case of braided~\(\qE\), the monoidal product 
	\(\boxtimes_{\DuRmat}\), in this case, is governed by the dual braiding induced by the 
	dual \(\Rmattxt\)\nb-matrix~\(\DuRmat\) in \(\Corepcat(\T)\), the representation category of \(\T\).
	Therefore, we provide a complete description of the dual of the braided quantum \(\textup{E}(2)\) 
	group building on the general construction of braided locally compact quantum groups over a 
	quasitriangular quantum group following~\cite{MRW2017}, taking in particular the quasitriangular 
	quantum group to be \(\T\), inside the general framework of braided multiplicative unitaries.  
	It turns out that the bidual of braided quantum \(\textup{E}(2)\)  group coincides with braided 
	quantum \(\textup{E}(2)\) group.

	Quantum \(\textup{E}(2)\) group is made to be a braided \(\Cst\)\nb-quantum group in an appropriate sense by the change 
	from real to complex values of the deformation parameter \(q\), as shown in~\cite{RR2021}, and the 
	associated multiplicative unitary then becomes a braided multiplicative 
	unitary~\cite{RR2021}*{Theorem 4.6}.
	The underlying \(\Cst\)\nb-algebra of \(\qE\) group is generated by a unitary \(v\) and an 
	unbounded normal element \(n\) with spectrum contained in the set defined and denoted 
	by~\(\Specn\defeq\big\{\lambda\in\C\colon |\lambda|\in |q|^{\Z}\big\}\cup \{0\}\), satisfying  
	the formal commutation relation~\(vnv^* = qn\)~\cite{RR2021}.
	
	We face similar kinds of technical limitations when trying to extend this duality for purely 
	complex (\(\textup{Im}(q)\neq 0\)) values of \(q\) as we did while building braided \(\qE\) groups and the reader is referred
	to the introduction in~\cite{RR2021} for details. 
  
	The required preliminaries are gathered in the next section. 
	The main content is contained in section~\ref{sec:Dual-Eq2}. In the last section, we construct 
	the bosonization, an analogous construction of the semidirect products of groups, of the dual 
	quantum group which yields an ordinary noncompact locally compact \(\Cst\)\nb-quantum group.

\section{Braided locally compact quantum groups - definition and duality}
  \label{sec:prelim} 
  	In this section we describe what braided locally compact quantum groups are 
	and what do we mean by their duals.
  	The generators of the \(\Cst\)\nb-algebras we shall encounter here are unbounded and hence 
	we shall use the notions of \(\Cst\)\nb-algebras generated by unbounded elements affiliated 
	with the \(\Cst\)\nb-algebra in the sense of Woronowicz~\cites{W1991b, W1995}. 
  
  	The notations used in~\cite{RR2021} are significantly relied upon in this text. As a result, 
	we try to keep the preliminaries  brief. Therefore, for any undefined notation we refer to 
	the aforementioned articles. Nevertheless, we reiterate some of the ideas here for the 
	readers' convenience.
	
	For a \(\Cst\)\nb-algebra \(A\), we denote by 
	\(\Mult(A)\), the multiplier algebra of \(A\) and by \(\U(A)\) the group of unitary multipliers 
	of~\(A\). For a Hilbert space~\(\Hils\), the identity operator on \(\Hils\) is denoted 
	by \(\Id_{\Hils}\) whereas the unit element of \(\Bound(\Hils)=\Mult(\Comp(\Hils))\) is denoted 
	by \(1_{\Hils}\). Also for two norm closed subsets \(X,Y\subset\Bound(\Hils)\)
	\[
	    X\cdot Y=\{xy\mid x\in X \text{ and } y\in Y\}^{\textup{CLS}},
	\]
	where ``CLS'' stands for the closed linear span. A morphism from \(A\) to \(B\) is a nondegenerate 
	\Star{}homomorphism \(\varphi\colon A\to \Mult(B)\) such that \(\varphi(A)B=B\). 
	The set of all morphisms from \(A\) to \(B\) is denoted by \(\Mor(A,B)\).
	Any quantum group \(\G\) is a pair
  	\((\Contvin(\G),\Comult[\G])\) consisting of a \(\Cst\)\nb-algebra 
	\(\Contvin(\G)\) and a morphism \(\Comult[\G]\) satisfying the coassociativity condition 
	\((\Id_{\Contvin(\G)}\otimes \Comult[\G])\circ \Comult[\G]{=}(\Comult[\G]\otimes\Id_{\Contvin(\G)})\circ \Comult[\G]\)
	 and the cancellation conditions
	 \(\Comult[\G](\Contvin(\G))(1\otimes \Contvin(\G))
	 {=}\Contvin(\G)\otimes \Contvin(\G)
	 {=}\Comult[\G](\Contvin(\G))(\Comult[\G](\Contvin(\G))\otimes 1)\).
	  For a complete definition of a \(\Cst\)\nb-quantum group see~\cites{MRW2014,SW2007}. 
	  We write~\(\G{=}(\Contvin(\G),\Comult[\G])\) to associate the underlying \(\Cst\)\nb-algebra 
	  and the comultiplication map.

\subsection*{Quantum group representations}

      	A \textup(right\textup) \emph{representation} of~\(\G\) on a \(\Cst\)\nb-algebra \(D\)
    	is an element \(\corep{U}\in\U(D\otimes \Contvin(\G))\) with
       	\begin{equation}
          \label{eq:corep_cond}
    	     (\Id_{D}\otimes\Comult[\G])\corep{U} =\corep{U}_{12}\corep{U}_{13}
    	     \qquad\text{in }\U(D\otimes \Contvin(\G)\otimes \Contvin(\G)).
         \end{equation} 
	In particular, if \(D=\Comp(\Hils[L])\) for some Hilbert space~\(\Hils[L]\), then \(\corep{U}\) is said to be 
	a \textup(right\textup) 
	representation of~\(\G\) on~\(\Hils[L]\).

   	The tensor product of two representations~\(\corep{U}^{i}\in\U(\Comp(\Hils[L]_{i})\otimes \Contvin(\G))\) of
    	\(\G\) on \(\Hils[L]_{i}\) for~\(i=1,2\) is a representation on \(\Hils[L]_1\otimes\Hils[L]_2\)
   	defined by
   	\begin{equation*}
       	    \corep{U}^{1}\tenscorep \corep{U}^{2}\defeq 
            \corep{U}^{1}_{13}\corep{U}^{2}_{23}
            \quad\text{ in }\U(\Comp(\Hils[L]_1\otimes\Hils[L]_2)\otimes \Contvin(\G)).
   	\end{equation*}
   
   	According to the notations above, if \((t\otimes 1_{\G})\corep{U}^1 =\corep{U}^2(t\otimes 1_{\G})\) 
	for some~\(t\in\Bound(\Hils[L]_1,\Hils[L]_2)\), then we say that \(t\) 
    	intertwines \(\corep{U}^{1}\) and \(\corep{U}^{2}\) and such an element is referred to as an intertwiner. 
	The set of all intertwiners between \(\corep{U}^{1}\) and \(\corep{U}^{2}\) is denoted as 
	\(\Hom^{\G}(\corep{U}^1, \corep{U}^2)\). A simple calculation demonstrates that the \(\tenscorep\) is 
	associative, and the trivial \(1\)\nb-dimensional representation is a tensor unit. 
	This establishes a \(\textup{W}^*\)\nb-category structure on the collection of representations, 
	which we denote by~\(\Corepcat(\G)\); for additional information, see~\cite[Section 3.1--2]{SW2007}. 
	Pairs \((\Hils[L],\corep{U})\) consisting of a Hilbert space \(\Hils[L]\) and a representation 
	of \(\G\) on \(\Hils[L]\) are objects of \(\Corepcat(\G)\).

\subsection*{Quantum group actions}
 
        A \textup(right\textup) action of~\(\G\) on a
        \(\Cst\)\nb-algebra~\(C\) is an injective morphism \(\Gamma\in\Mor(C,C\otimes\Contvin(\G))\) 
       with the following properties:
       \begin{enumerate}
         \item \(\Gamma\) is a comodule structure, that is,
             \begin{equation}
               \label{eq:right_action}
                 (\Id_C\otimes\Comult[\G])\circ\Gamma
                 = 
                 (\Gamma\otimes\Id_{\Contvin(\G)})\circ\Gamma;
             \end{equation}
  	\item \(\Gamma\) satisfies the \emph{Podle\'s condition}:
    	   \begin{equation}
      	      \label{eq:Podles_cond}
      		\Gamma(C)(1_C\otimes \Contvin(\G))=C\otimes \Contvin(\G).
    	   \end{equation}
       \end{enumerate}
       
       The pair~\((C,\Gamma)\) is referred to as a \emph{\(\G\)\nb-\(\Cst\)\nb-algebra}. 
       When it is obvious from the context, we shall drop~\(\Gamma\) from our notation and 
       merely write \(C\) is a \(\G\)\nb-\(\Cst\)\nb-algebra.
     
       	A morphism \(f\colon C\to D\) between two~\(\G\)\nb-\(\Cst\)\nb-algebras 
     	\((C,\Gamma_{C})\) and \((D,\Gamma_{D})\) is said to be 
	\emph{\(\G\)\nb-\hspace{0pt}equivariant} if 
     	\[
	   \Gamma_{D}\circ f =(f\otimes\Id_{\G})\circ\Gamma_{C}.
	\]
	
	The set of \(\G\)\nb-equivariant morphisms from \(C\) to \(D\) is denoted by \(\Mor^{\G}(C, D)\), 
	while the category containing \(\G\)\nb-\(\Cst\)\nb-algebras as objects and \(\G\)\nb-equivariant 
	morphisms as arrows is denoted by \(\Cstcat(\G)\). 
	
	For all~\(f\in\Contvin(G)\), \(g\in G\), every continuous action~\(\phi\) of a locally 
	compact group \(G\) on a \(\Cst\)\nb-algebra \(D\) produces an action \(\Gamma_{D}\) 
	of \(\G{=}(\Contvin(G),\Comult[\Contvin(G)])\) on \(D\) given 
	by~\((\Gamma_{D}(d) f)(g)\defeq f(\phi_{g}(d))\)  and vice versa. 
	Additionally, when \(G\)\nb-\(\Cst\)-algebras are used as objects and \(G\)\nb-equivariant 
	morphisms are used as arrows, the category \(\Cstcat(\G)\) is equivalent to the category \(\Cstcat(G)\). 
	Analogously, \(\Corepcat(\G)\) is equivalent to the representation category \(\Corepcat(G)\).

\subsection*{Quasitriangular quantum groups}
  \label{sec:Quasitriag}
   
	Let~\(\dual{\G}=(\Contvin(\DuG),\DuComult[\G])\) be the dual of \(\G\). 
     	An element \(\Rmat\in\U(\Contvin(\DuG)\otimes\Contvin(\DuG))\) is said to be 
	an~\(\Rmattxt\)\nb-matrix on~\(\DuG\) if it is a bicharacter, that is , it satisfies the following:
  	\[
    	   (\Id_{\Contvin(\dual{\G})}\otimes\DuComult[\G])\Rmat=\Rmat_{12}\Rmat_{13}, 
    	   \quad 
  	   (\DuComult[\G]\otimes\Id_{\Contvin(\dual{\G})})\Rmat =\Rmat_{23}\Rmat_{13},
 	 \]
	 in \(\U(\Contvin(\dual{\G})\otimes\Contvin(\dual{\G})\otimes\Contvin(\dual{\G}))\)
  	and satisfies the~\(\Rmattxt\)\nb-matrix condition:
    	\begin{equation}
  	  \label{eq:R-mat}
  	    \Rmat(\flip\DuComult[\G](\hat{a}))\Rmat^{*}=\DuComult[\G](\hat{a}) 
  	    \qquad\text{for all~\(\hat{a}\in\Contvin(\dual{\G})\).}
 	\end{equation}
	A \emph{quasitriangular quantum group} is a quantum group~\(\G\) with an \(\Rmattxt\)\nb-matrix
	 \(\Rmat\in\U(\Contvin(\dual{\G})\otimes\Contvin(\dual{\G}))\)~\cite[Section 3]{MRW2016}. 
 
    	Recall from~\cite[Lemma 2.2]{MRW2016} that
      	the dual \(\hat{\Rmat}\defeq \flip(\Rmat^*)\) of a bicharacter 
	\(\Rmat\in\U(\Contvin(\dual{\G})\otimes \Contvin(\dual{\G}))\) is an
      	\(\Rmattxt\)\nb-matrix if and only if \(\Rmat\) is an \(\Rmattxt\)\nb-matrix. 
	   
   	The categories \(\Corepcat(\G)\) and~\(\Cstcat(\G)\) are particularly interesting when \(\G\) 
	is quasitriangular. More specifically, the representation category \(\Corepcat(\G)\) is a braided 
	monoidal category and \(\Cstcat(\G)\) is a monoidal category by virtue 
	of~\cite[Proposition 3.2 \& Theorem 4.3]{MRW2016}. The braiding isomorphism in~\(\Corepcat(\G)\) 
	is a unitary operator. Such a category is referred to as a unitarily braided monoidal category. 
	We take into consideration the monoidal product~\(\boxtimes_{\DuRmat}\) on~\(\Cstcat(\G)\) and 
	the dual braiding corresponding to the unitary braiding~\(\Braiding{}{}\) on~\(\Corepcat(\G)\) whose 
	explicit construction is described in~\cites{RR2021,MRW2014, MRW2016}. 
	The latter construction was motivated by~\cite{NV2010}.  We briefly recall the construction 
	for \(\G{=}\T\) in the following subsection.

\subsection*{Duality of braided quantum groups}
	The circle group~\(\T\) can be viewed as a quasitriangular quantum group.   
   	Every bicharacter~\(\chi\colon \Z\times \Z\to\T\) satisfies the~\(\Rmattxt\)\nb-matrix 
  	condition~\eqref{eq:R-mat} because \(\Contvin{(\dual{\T})}=\Contvin(\Z)\) is a commutative 
  	\(\Cst\)\nb-algebra. Thus~\(\T\) is quasitriangular with respect to any bicharacter on~\(\Z\).
	In particular, for~\(q\in\C\setminus\{0\}\) the bicharacter~\(\Rmat\colon\Z\times\Z\to\T\) 
  	defined by \(\Rmat(m,n)=\zeta^{mn}\) where \(\zeta=\frac{q}{\conj{q}}\)  
	is an \(\Rmattxt\)\nb-matrix on~\(\Z\).
   
%	Let~\(\corep{U}\in\U(\Comp(\Hils[L])\otimes \Cont(\T))\) be a representation of \(\T\) on \(\Hils[L]\). 
%	Then there is a unique solution 
%   	 \(\widehat{Z}\in\U(\Hils[L]\otimes\Hils[L])\) such that
%    	\begin{equation}
%      	  \label{eq:Zhat-expr}
%             \corep{U}_{1\beta}\corep{U}_{2\alpha}\widehat{Z}_{12}
%      	     =
%      	     \corep{U}_{2\alpha}\corep{U}_{1\beta}
%      	     \quad
%      	     \textup{ in }\U(\Hils[L]\otimes \Hils[L]\otimes \Hils[K])
%    	\end{equation}
%    	for any \(\DuRmat\)\nb-Heisenberg pair \((\beta,\alpha)\) on any Hilbert space \(\Hils[K]\), 
%	see~\cite[Theorem 4.1]{MRW2014}. 
%     	Hence, the unitary \(\dual{Z}\) determines the dual braiding~\(\Dualbraiding{}{}\defeq\widehat{Z}\circ\Flip\).
%    	For more information on Heisenberg pairs see~\cite{MRW2014}, and for the braiding operator see~\cite{MRW2016}.  
	
	Let~\(\corep{U}\in\U(\Comp(\Hils[L])\otimes \Cont(\T))\) be a representation of \(\T\) on \(\Hils[L]\). 
	Then there is a unique solution 
   	 \(Z\in\U(\Hils[L]\otimes\Hils[L])\) such that
    	\begin{equation}
      	  \label{eq:Z-expr}
             \corep{U}_{1\alpha}\corep{U}_{2\beta}Z_{12}
      	     =
      	     \corep{U}_{2\beta}\corep{U}_{1\alpha}
      	     \quad
      	     \textup{ in }\U(\Hils[L]\otimes \Hils[L]\otimes \Hils[K])
    	\end{equation}
    	for any \(\Rmat\)\nb-Heisenberg pair \((\alpha,\beta)\) on any Hilbert space \(\Hils[K]\), 
	see~\cite[Theorem 4.1]{MRW2014}. 
     	Hence, the unitary \(Z\) determines the braiding~\(\Braiding{}{}\defeq Z\circ\Flip\). It turns out that the 
	dual braiding~\(\Dualbraiding{}{}\defeq\widehat{Z}\circ\Flip\) where \(\dual{Z}=\Flip Z^* \Flip\), governed by the dual 
	\(\Rmat\)\nb-matrix~\(\dual{\Rmat}\), is inverse to the original braiding. 
	Using the fact that a pair of representations \((\alpha,\beta)\) is an \(\Rmat\)\nb-Heisenberg pair if and only if 
	\((\beta,\alpha)\) is an \(\DuRmat\)\nb-Heisenberg pair, it follows that
	\(\dual{Z}\) satisfies a variant of equation~\eqref{eq:Z-expr}: 
	\begin{equation}
      	  \label{eq:Zhat-expr}
             \corep{U}_{1\beta}\corep{U}_{2\alpha}\widehat{Z}_{12}
      	     =
      	     \corep{U}_{2\alpha}\corep{U}_{1\beta}
      	     \quad
      	     \textup{ in }\U(\Hils[L]\otimes \Hils[L]\otimes \Hils[K]).
    	\end{equation}
    	For more information on Heisenberg pairs see~\cite{MRW2014}, and for the braiding operator see~\cite{MRW2016}.

%   	Recall that the unitary braiding operator~\(\Braiding{}{}\) on \(\Corepcat(\T)\) induced by 
%	the \(\Rmattxt\)\nb-matrix \(\Rmat\) is given by \(\Braiding{}{}\defeq Z\circ \Sigma\), 
%	where \(Z\) satisfies an appropriate variant of~\eqref{eq:Zhat-expr} for
%   	any \(\Rmat\)\nb-Heisenberg pair~\((\alpha,\beta)\) on~\(\Hils[L]\)~\cite{RR2021}.
%
    
     	Now suppose~\((C_{i},\Gamma_{i})\) are two objects in~\(\Cstcat(\T)\) for \(i=1,2\) 
	and let~\(\Dualbraiding{}{}\) be the unitary dual braiding. Let~\(C_i\hookrightarrow\Bound(\Hils[L]_i)\) 
	be the \(\T\)\nb-equivariant representations of \(C_i\) on \(\Hils[L]_i\) for \(i=1,2\) respectively. 
	Then the monoidal product \(\boxtimes_{\DuRmat}\) on \(\Cstcat(\T)\) is given by
    	\[
             C_{1}\boxtimes_{\DuRmat} C_{2}
             \defeq 
             \left\{\dual{j_1}(x)\dual{j_2}(y)\mid x\in C_1, y\in C_2\right\}^{\textup{CLS}}
             \subset
             \Bound(\Hils[L]_1\otimes\Hils[L]_2),
    	\]
    	where \(\dual{j_1}(x)=x\otimes \Id_{\Hils[L]_1}\) and 
	\(\dual{j_2}(y)=\Dualbraiding{}{}(y\otimes\Id_{\Hils[L]_2})\Dualbraiding{}{}^*\) 
	for all~\(x\in C_1\) and \(y\in C_2\).
  	
	Then~\(C_{1}\boxtimes_{\DuRmat} C_{2}{=} \dual{j_{1}}(C_1)\dual{j_{2}}(C_2)\) 
    	is a~\(\Cst\)\nb-algebra~\cite[Theorem 4.6]{MRW2014} and
    	\[
      	   \Gamma_{1}\bowtie\Gamma_{2}(\dual{j_{1}}(c_1)\dual{j_{2}}(c_2))
      	   \defeq 
      	   (\dual{j_{1}}\otimes\Id_{A})\Gamma_{1}(c_1)(\dual{j_{2}}\otimes\Id_{A})\Gamma_{2}(c_2)
    	\] 
    	defines the diagonal action of~\(\T\) on~\(C_1\boxtimes_{\DuRmat}C_2\)~\cite[Proposition 4.1]{MRW2016}.
    	Hence, \(C_1\boxtimes_{\DuRmat}C_2\) is an object in \(\Cstcat(\T)\). 
	For two objects~\((C'_{i},\Gamma'_{i})\)  in~\(\Cstcat(\T)\) 
	and~\(f_{i}\in\Mor^{\T}(C_{i},C'_{i})\) for~\(i=1,2\), the monoidal product 
	\(f_{1}\boxtimes_{\DuRmat}f_{2}\) is defined in a canonical way.

	The multiplicative unitary, which simultaneously encodes much of the information of a quantum 
	group and it's dual, is one of the key elements in the theory of quantum groups in the operator 
	algebraic framework~\cites{BS1993,W1996,SW2007}.  Along with a full investigation of a more 
	generic entity known as the braided multiplicative unitary, the theory of braided \(\Cst\)\nb-quantum 
	groups is explored in~\cite{MRW2017}. More explicitly, for a Hilbert space \(\Hils[L]\), 
	a unitary operator~\(\BrMultunit\) on \(\Hils[L]\otimes \Hils[L]\) is said to be a multiplicative unitary 
	if it satisfies the pentagonal equation
	\[
   	    \BrMultunit_{23}\BrMultunit_{12}=\BrMultunit_{12}\BrMultunit_{13}\BrMultunit_{23} 
   	    \quad 
  	    \text{ in }\U(\Hils[L]\otimes\Hils[L]\otimes\Hils[L]).
	\]
	More assumptions such as modularity and manageability are required on \(\BrMultunit\) 
	to construct \(\Cst\)\nb-quantum groups. Braided analogues of a multiplicative 
	unitary and its manageability are defined in~\cite[Definition 3.2]{MRW2017} 
	and~\cite[Definition 3.5]{MRW2017} respectively.
%We refer to the aforementioned articles for definitions and related results. 
	We will, however, use an alternative but equivalent approach to the one established 
	in~\cite{MRW2017}. As a result, our definition of the notion of 
	braided multiplicative unitary over a regular quasitriangular quantum group becomes 
	more concise one, see~\cite[Definition3.1 and Definition 3.4]{RR2021}.
    
    	\begin{theorem}
      	  \label{the:manag-DuBrMultunit}
  		Let~\(\BrMultunit\in\U(\Hils[L]\otimes\Hils[L])\) be a manageable braided multiplicative 
		unitary over \(\T\) with respect to \((\corep{U},\Rmat)\). 
  		Then the dual \(\DuBrMultunit\defeq\Dualbraiding{}{}\BrMultunit^{*}\Dualbraiding{}{}^*\) 
		is a manageable braided multiplicative unitary over \(\T\) with respect to \((\corep{U},\DuRmat)\). 
    	\end{theorem}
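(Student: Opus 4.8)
The plan is to verify that $\DuBrMultunit\defeq\Dualbraiding{}{}\,\BrMultunit^{*}\,\Dualbraiding{}{}^{*}$ satisfies the three clauses that define a manageable braided multiplicative unitary over $\T$ with respect to $(\corep{U},\DuRmat)$: that $\DuBrMultunit$ is a $\T$\nb-equivariant unitary on $\Hils[L]\otimes\Hils[L]$, that it obeys the $\DuRmat$\nb-braided pentagon of \cite[Definition~3.1]{RR2021}, and that it is manageable in the sense of \cite[Definition~3.4]{RR2021}. The first clause is nearly automatic: unitarity is clear, and for equivariance one recalls that $\Dualbraiding{}{}=\widehat{Z}\circ\Flip$ is the unitary dual braiding on $\Corepcat(\T)$ induced by $\DuRmat$, in particular a $\T$\nb-intertwiner of the two orderings of $\corep{U}\tenscorep\corep{U}$, hence an element of $\Hom^{\T}(\corep{U}\tenscorep\corep{U},\corep{U}\tenscorep\corep{U})$; since $\BrMultunit$ lies in the same intertwiner space (this being exactly its equivariance), so does $\DuBrMultunit$.

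The heart of the matter is the braided pentagon. Using $\Dualbraiding{}{}^{*}=\Flip\,\widehat{Z}^{*}$ one first rewrites
\[
  \DuBrMultunit=\widehat{Z}\,\bigl(\Flip\,\BrMultunit^{*}\,\Flip\bigr)\,\widehat{Z}^{*}
  \qquad\text{in }\U(\Hils[L]\otimes\Hils[L]),
\]
and then runs, inside the braided setting, the classical argument that $\Flip\,W^{*}\,\Flip$ is a multiplicative unitary whenever $W$ is: one starts from the $\Rmat$\nb-braided pentagon for $\BrMultunit$ on $\Hils[L]^{\otimes 3}$, passes to adjoints, flips the appropriate pairs of legs, and conjugates the relevant leg\nb-pairs by copies of $\widehat{Z}$ so that each factor $\Flip\,\BrMultunit^{*}\,\Flip$ becomes $\DuBrMultunit$. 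This yields a pentagon\nb-type identity for $\DuBrMultunit$ whose twisting operators are assembled from $Z$ and $\widehat{Z}$, and the remaining task is to recognise this assembled twist as precisely the one prescribed by the $\DuRmat$\nb-braided pentagon. For this I would use the hexagon identities for the braiding $\Dualbraiding{}{}=\widehat{Z}\circ\Flip$, the compatibility \eqref{eq:Zhat-expr} of $\widehat{Z}$ with $\corep{U}$ and its $\Rmat$\nb-analogue for $Z$ (which move the twists past the $\BrMultunit$\nb-legs as needed), and the Yang--Baxter\nb-type relation between $Z$ and $\widehat{Z}$ coming from $\DuRmat=\flip(\Rmat^{*})$. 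Keeping the leg indices and the various copies of $\widehat{Z}$ straight through this rearrangement is the one genuinely delicate step, and is where I expect most of the work to lie.

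Finally, manageability of $\BrMultunit$ in the sense of \cite[Definition~3.4]{RR2021} supplies a strictly positive operator $Q$ on $\Hils[L]$, a unitary $\widetilde{\BrMultunit}\in\U(\conj{\Hils[L]}\otimes\Hils[L])$ satisfying the manageability identity, and a compatibility of $Q$ with $\corep{U}$ — hence with $Z$ and $\widehat{Z}$, which are built from $\corep{U}$. For $\DuBrMultunit$ I would retain the operator $Q$ (with the convention adjustments familiar from the ordinary case, cf.\ the self\nb-duality of manageable multiplicative unitaries in \cite{SW2007} and \cite{MRW2014}) and take $\widetilde{\DuBrMultunit}$ to be the corresponding braiding\nb-conjugate of $\widetilde{\BrMultunit}$; the manageability identity for $\DuBrMultunit$ then reduces, via the same leg relabelling as in the pentagon step and the fact that $\Dualbraiding{}{}$ commutes with $Q\otimes Q$, to that for $\BrMultunit$. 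Since these three clauses exhaust the definition, this completes the proof. Alternatively, one may deduce the statement by bosonising $\BrMultunit$ to an ordinary manageable multiplicative unitary, invoking the classical duality of manageable multiplicative unitaries, and noting that the $\Rmattxt$\nb-matrix of the dual of the bosonised quantum group is $\DuRmat$ (cf.\ \cite{MRW2017}); the direct argument above is in effect this computation with the $\T$\nb-leg stripped away.
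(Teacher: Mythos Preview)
Your proposal is correct and tracks the paper's argument closely; the paper itself defers most of the work to \cite[Proposition~3.6]{MRW2017} and then fills in only the manageability details specific to the present setup. The chief difference is one of emphasis: you devote the bulk of your sketch to the $\DuRmat$\nb-braided pentagon, which the paper does not discuss at all in this proof (it is absorbed into the citation), whereas the paper spends its space on the manageability witness. There it shows that $Q_{\Hils[L]}\otimes Q_{\Hils[L]}$ commutes with $Z$ by first checking commutation of $Q_{\Hils[L]}\otimes Q$ with both $\Corep{U}$ and the induced representation $\Corep{V}$, and then writes down the witness explicitly as $\widetilde{\DuBrMultunit}\defeq(\Flip\,\widetilde{\BrMultunit}^{*}\,\Flip)^{\transpose\otimes\transpose}$, together with an auxiliary $\widetilde{\widehat{Z}}\defeq(\Flip\,\widetilde{Z}^{*}\,\Flip)^{\transpose\otimes\transpose}$. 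Your phrase ``the corresponding braiding\nb-conjugate of $\widetilde{\BrMultunit}$'' is the one place where you are too vague: the witness lives on $\conj{\Hils[L]}\otimes\Hils[L]$ and is obtained via the flip and the leg\nb-wise transpose, not by a plain conjugation by $\widehat{Z}$. Your closing bosonisation remark is exactly the route \cite{MRW2017} takes, so your ``alternative'' is in fact the argument the paper is citing.
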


    	\begin{proof}
    		The proof essentially follows the arguments used in the proof of~\cite[Proposition 3.6]{MRW2017}.
  		Let~\(\widetilde{\BrMultunit}\) and \(Q\) be the operators that witness the manageability 
		of the braided multiplicative unitary \(\BrMultunit\). And let \(Z\) and \(\widetilde{Z}\) 
		be the operators appearing in~\cite[Equation 3.5]{RR2021}.
	
		The \(\Rmattxt\)\nb-matrix \(\Rmat\) being a bicharacter, it induces a quantum group 
		homomorphism from \(\T\) to \(\Z\)~\cite{MRW2012} and hence induces a representation 
		\(\corep{V}\in\U(\Comp(\Hils[L])\otimes \Contvin(\Z))\) satisfying~\cite[Equation (32)]{MRW2012}. 
		Recall that \(\Corep{W}\) is a multiplicative unitary generating \(\T\). 
		Let~\(\pi\in\Mor(\Cont(\T),\Comp(\Hils))\) and \(\hat{\pi}\in\Mor(\Contvin(\Z),\Comp(\Hils))\)
		be the embeddings as in~\cite[Theorem 2]{SW2007}, see also~\cites{W1996,SW2001}.
        		Now~\(Q_{\Hils[L]}\otimes Q\) commutes with~\(\Corep{U}=(\Id_{\Hils[L]}\otimes\pi)\corep{U}\). Also, 
		we have observed in the proof of~\cite[Theorem 3.6]{RR2021} that~\(Q_{\Hils[L]}\otimes Q\) 
		commutes with~\(\Corep{V}=(\Id_{\Hils[L]}\otimes\hat{\pi})\corep{V}\). 
		Moreover, let \((\Hils[L],\corep{U}^1)\) and \((\Hils[L],\corep{U}^2)\) be elements 
		in~\(\Corepcat(\T)\) and \((\Hils[L],\corep{V}^1)\) and \((\Hils[L],\corep{V}^2)\) be the 
		corresponding elements in~\(\Corepcat(\dual{\T})\).
	Then, \(Z\) satisfies
	\[
	  \left( (\Id_{\Hils[L]}\otimes \pi)\corep{U}^1\right)_{13}
	  \left( (\Id_{\Hils[L]}\otimes \hat{\pi})\corep{V}^2\right)_{23}
	  Z_{12}
	  =
	  \left( (\Id_{\Hils[L]}\otimes \hat{\pi})\corep{V}^2\right)_{23}
	  \left( (\Id_{\Hils[L]}\otimes \pi)\corep{U}^1\right)_{13}
	\]
	in~\(\U(\Hils[L]\otimes\Hils[L]\otimes\Hils)\);
%	The unitary~\(Z\in\U(\Hils[L]\otimes\Hils[L])\) is 
%	defined by~\eqref{eq:Apr26-17:48}; 
	hence it commutes with \(Q_{\Hils[L]}\otimes Q_{\Hils[L]}\).

	Following a similar line of arguments as in~\cite[Proposition 1.4(2)]{W1996} we obtain
	\begin{equation}
	 \label{eq:manageable-aux}
	 \langle x\otimes u \mid Z^*\BrMultunit \mid y\otimes v\rangle
	 =
	 \langle \conj{Q_{\Hils[L]}(y)}\otimes u\mid\widetilde{\BrMultunit}\widetilde{Z}\mid
	 \conj{Q_{\Hils[L]}^{-1}(x)}\otimes v\rangle
	\end{equation}
	for \(y\in\dom(Q_{\Hils[L]})\), \(x\in\dom(Q_{\Hils[L]}^{-1})\), \(u,v\in\Hils[L]\).
	
	Let~\(\widetilde{\widehat{Z}},\widetilde{\DuBrMultunit}\in\U(\conj{\Hils[L]}\otimes\Hils[L])\) be the 
	unitaries defined by
	\[
	  \widetilde{\widehat{Z}}\defeq (\Flip\widetilde{Z}^*\Flip)^{\transpose\otimes\transpose},
	  \qquad
	  \widetilde{\DuBrMultunit}\defeq (\Flip\widetilde{\BrMultunit}^*\Flip)^{\transpose\otimes\transpose}.
	\]
	Then by definition, \(\widehat{Z}^*\DuBrMultunit=\Flip\BrMultunit^* Z\Flip\) and 
	\(\widetilde{\DuBrMultunit}\widetilde{\widehat{Z}}=(\Flip\widetilde{Z}^*\widetilde{\BrMultunit}^*\Flip)^{\transpose\otimes\transpose}\).
	Verification of the manageability condition is similar to the computations used 
	in~\cite[Proposition 3.6]{MRW2017} and therefore we omit it.

	Since for the dual braided multiplicative unitary the unitary \(Z\) becomes \(\dual{Z}\), the operator
	\(\widetilde{\DuBrMultunit}\) witnesses the manageability of \(\DuBrMultunit\).
    \end{proof}

	The manageability of the dual braided multiplicative unitary \(\DuBrMultunit\) enables 
	us to construct a braided quantum group from it in the following sense.

    	\begin{theorem}
     	  \label{the:DuBQgrp-quasitriag}
    		Let~\(\BrMultunit\) be a manageable braided multiplicative unitary over \(\T\) 
		relative to \((\corep{U},\Rmat)\) and \(\DuBrMultunit=\Dualbraiding{}{}\BrMultunit^{*}\Dualbraiding{}{}^*\). 
		Let 
		\begin{equation}
%  	  \label{eq:def-B1904}
   	    	    \Contvin(\DuG[H])\defeq
	    	    \{(\omega\otimes\Id_{\Comp(\Hils[L])})\DuBrMultunit\mid\omega\in\Bound(\Hils[L])_{*}\}^{\textup{CLS}}
	    	    \subset\Bound(\Hils[L]).
 		\end{equation}
 		Then
 		\begin{enumerate}
  	   	    \item \(\Contvin(\DuG[H])\) is a nondegenerate, separable \(\Cst\)\nb-subalgebra 
		    		of~\(\Bound(\Hils[L])\)\textup{;}
  	   	    \item Define~\(\dual{\Gamma}(\hat{b})\defeq \corep{U}(\hat{b}\otimes 1_{A})\corep{U}^{*}\) 
		    		for all~\(\hat{b}\in \Contvin(\DuG[H])\). 
	   			Then \(\dual{\Gamma}\in\Mor(\Contvin(\DuG[H]),\Contvin(\DuG[H])\otimes \Cont(\T))\) 
				and~\((\Contvin(\DuG[H]),\dual{\Gamma})\) is an object of~\(\Cstcat(\T)\)\textup{;}
  	   	    \item \(\DuBrMultunit\in\U(\Comp(\Hils[L])\otimes \Contvin(\DuG[H]))\)\textup{;}
 	   	    \suspend{enumerate}
	     	   	Consider the twisted tensor product~\(\Contvin(\DuG[H])\boxtimes_{\DuRmat}\Contvin(\DuG[H])\). 
  	     	   	Suppose \(\dual{j_{1}},\dual{j_{2}}\) are the canonical embeddings : 
		   	\(\dual{j_{1}},\dual{j_{2}}\in\Mor^{\T}(\Contvin(\DuG[H]),\Contvin(\DuG[H])\boxtimes_{\DuRmat}\Contvin(\DuG[H]))\). 
	  	   \resume{enumerate}  
 	   	   \item There exists a 
		   		unique~\(\Comult[{\DuG[H]}]\in
				\Mor^{\T}(\Contvin(\DuG[H]),\Contvin(\DuG[H])\boxtimes_{\DuRmat}\Contvin(\DuG[H]))\) 
				characterised by 
  			\begin{equation}
   		  	  \label{eq:ComulB-R}
      		  	   (\Id_{\Comp(\Hils[L])}\otimes\Comult[{\DuG[H]}])\DuBrMultunit
		  	   =
		  	   \bigl((\Id_{\Comp(\Hils[L])}\otimes \dual{j_{1}})\DuBrMultunit\bigr) 
      		  	   \bigl((\Id_{\Comp(\Hils[L])}\otimes \dual{j_{2}})\DuBrMultunit\bigr).
   			\end{equation}
   			Moreover, \(\Comult[{\DuG[H]}]\) is coassociative: 
			\((\Comult[{\DuG[H]}]\boxtimes_{\DuRmat}\Id_{\Contvin(\DuG[H])})\circ\Comult[{\DuG[H]}]
			=(\Id_{\Contvin(\DuG[H])}\boxtimes_{\DuRmat}\Comult[{\DuG[H]}])\circ\Comult[{\DuG[H]}]\)
   			and satisfy the cancellation conditions: 
			\[
			\Comult[{\DuG[H]}](\Contvin(\DuG[H]))\dual{j_{1}}(\Contvin(\DuG[H]))
			=\Contvin(\DuG[H])\boxtimes_{\DuRmat}\Contvin(\DuG[H])
			=\Comult[{\DuG[H]}](\Contvin(\DuG[H]))\dual{j_{2}}(\Contvin(\DuG[H])).
			\]
 		\end{enumerate} 
 		The triple~\((\Contvin(\DuG[H]),\Comult[{\DuG[H]}],\dual{\Gamma})\) is said to be the 
		\emph{braided \(\Cst\)\nb-quantum group} \textup{(}over~\(\T\)\textup{)} 
		generated by the braided multiplicative unitary~\(\DuBrMultunit\) relative to \((\corep{U},\DuRmat)\).
    	\end{theorem}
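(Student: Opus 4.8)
The statement is an instance of the general construction of a braided \(\Cst\)\nb-quantum group from a manageable braided multiplicative unitary over a regular quasitriangular quantum group~\cite{MRW2017} (see also~\cite{RR2021}), now applied to \(\DuBrMultunit\); the plan is to reduce it to that machinery. By Theorem~\ref{the:manag-DuBrMultunit}, \(\DuBrMultunit=\Dualbraiding{}{}\BrMultunit^{*}\Dualbraiding{}{}^{*}\) is a manageable braided multiplicative unitary over \(\T\) with respect to \((\corep{U},\DuRmat)\). Since \(\DuRmat=\flip(\Rmat^{*})\) is again an \(\Rmattxt\)\nb-matrix on \(\Z\) by~\cite[Lemma 2.2]{MRW2016} and \(\T\) is regular, the pair \((\T,\DuRmat)\) is a regular quasitriangular quantum group; the cited construction therefore applies verbatim with \(\Rmat\) replaced by \(\DuRmat\) and produces the triple \((\Contvin(\DuG[H]),\Comult[{\DuG[H]}],\dual{\Gamma})\). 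It remains to match the four items with the output of that construction.

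For (1), that \(\Contvin(\DuG[H])\) is a nondegenerate, separable \(\Cst\)\nb-subalgebra of \(\Bound(\Hils[L])\) follows from the braided pentagon equation for \(\DuBrMultunit\) by the standard slice argument for (manageable) multiplicative unitaries~\cites{BS1993,W1996}, separability being inherited from that of \(\Hils[L]\). Item (3), \(\DuBrMultunit\in\U(\Comp(\Hils[L])\otimes\Contvin(\DuG[H]))\), is then immediate from the density of the slices \((\omega\otimes\Id_{\Comp(\Hils[L])})\DuBrMultunit\) defining \(\Contvin(\DuG[H])\). For (2), that \(\dual{\Gamma}(\hat b)=\corep{U}(\hat b\otimes 1)\corep{U}^{*}\) gives an element of \(\Mor(\Contvin(\DuG[H]),\Contvin(\DuG[H])\otimes\Contvin(G))\) satisfying the comodule identity~\eqref{eq:right_action} and the Podle\'s condition~\eqref{eq:Podles_cond} follows from the representation condition~\eqref{eq:corep_cond} for \(\corep{U}\) together with the compatibility between \(\DuBrMultunit\) and \(\corep{U}\) built into the notion of a braided multiplicative unitary over \(\T\); hence \((\Contvin(\DuG[H]),\dual{\Gamma})\) is an object of \(\Cstcat(\T)\).

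For (4) I would take~\eqref{eq:ComulB-R} as the defining relation for \(\Comult[{\DuG[H]}]\). Since \(\dual{j_{1}},\dual{j_{2}}\in\Mor^{\G}(\Contvin(\DuG[H]),\Contvin(\DuG[H])\boxtimes_{\DuRmat}\Contvin(\DuG[H]))\), the right-hand side of~\eqref{eq:ComulB-R} is a unitary in \(\U\bigl(\Comp(\Hils[L])\otimes(\Contvin(\DuG[H])\boxtimes_{\DuRmat}\Contvin(\DuG[H]))\bigr)\) which, by the braided pentagon equation for \(\DuBrMultunit\) and the description of \(\boxtimes_{\DuRmat}\) recalled in the preliminaries~\cites{MRW2014,MRW2016}, satisfies the representation condition; it is therefore of the form \((\Id_{\Comp(\Hils[L])}\otimes\Comult[{\DuG[H]}])\DuBrMultunit\) for a unique morphism \(\Comult[{\DuG[H]}]\), whose \(\T\)\nb-equivariance follows from that of \(\dual{j_{1}},\dual{j_{2}}\) and the defining formula for \(\dual{\Gamma}\). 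Coassociativity reduces, on applying~\eqref{eq:ComulB-R} twice, to the associativity coherence of \(\boxtimes_{\DuRmat}\) on \(\Cstcat(\T)\) governed by the dual braiding \(\Dualbraiding{}{}\), and the two cancellation conditions follow from the Podle\'s\nb-type density of \(\DuBrMultunit\) in \(\Comp(\Hils[L])\otimes\Contvin(\DuG[H])\) and of the \(\dual{j_{i}}\)\nb-images.

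The one genuinely non\nb-formal point, and hence the main obstacle, is the bookkeeping of the dual braiding: one must check throughout that the monoidal product is \(\boxtimes_{\DuRmat}\), built from \(\Dualbraiding{}{}=\widehat{Z}\circ\Flip\) and not from \(\Braiding{}{}=Z\circ\Flip\), and that all the intertwining identities used for \(\corep{U}\), for the induced representation \(\corep{V}\), and for the operators \(Z\), \(\widehat{Z}\) and \(Q\) are the ones compatible with \(\DuRmat\). These are exactly the facts established in the proof of Theorem~\ref{the:manag-DuBrMultunit}; once they are in place the verifications of (1)--(4) are the routine computations of~\cites{RR2021,MRW2017}, which I would only indicate, referring to those sources for the details.
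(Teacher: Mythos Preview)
Your proposal is correct and aligned with the paper's approach: the paper's own proof is a single sentence, ``See~\cite[Theorem 5.1]{Roy2022}'', deferring entirely to the general construction of a braided \(\Cst\)\nb-quantum group from a manageable braided multiplicative unitary. Your write-up is a more explicit unpacking of that same reduction (using the earlier references~\cites{MRW2017,RR2021} rather than~\cite{Roy2022}), and the bookkeeping you flag about replacing \(\Rmat\) by \(\DuRmat\) and \(\Braiding{}{}\) by \(\Dualbraiding{}{}\) is exactly the point.
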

    
    \begin{proof}
      See~\cite[Theorem 5.1]{Roy2022}.
    \end{proof}

 	\begin{remark}
     		Theorem~\ref{the:manag-DuBrMultunit} and Theorem~\ref{the:DuBQgrp-quasitriag} 
		remain valid when \(\T\) is replaced by a regular quasitriangular quantum group~\(\G\), 
		see for example~\cites{RR2021,Roy2022}.
 	\end{remark}
 
\section{Braided duallity for  quantum \(\textup{E}(2)\) groups}  
   \label{sec:Dual-Eq2}
   
   	The construction of the braided dual of \(\qE\) groups in the sense of Theorem~\ref{the:DuBQgrp-quasitriag} 
	will be the focus of this section. We begin by constructing the dual braided multiplicative unitary.
   
   	Let~\(q\in\C\setminus \{0\}\) such that \(\Mod{q}<1\). Through the rest of the article we shall make implicit use 
	of this assumption without explicitly mentioning it.
	     
\subsection{The multiplicative unitary \(\DuBrMultunit\)}
      \label{sec:Du-Brmultunit-E2}
      	Let~\(\Hils=\ell^2(\Z)\) with an orthonormal basis \(\{e_{i}\}_{i\in\Z}\) and identify \(\Hils[L]=\Hils\otimes \Hils\). 
	Then, \(\{e_{i,j}=e_{i}\otimes e_j\}_{i,j\in\Z}\) is an orthonormal basis for \(\Hils[L]\).
	
	A pair of operators~\((v,n)\) such that 
	\begin{equation}
	 \label{eqn:Eq2-gen}
	v \text{ is unitary }, \quad 
	n \text{ is normal }, \quad
	vnv^*=qn, \quad \text{ and }\quad
	\Sp(n)=\Specn, 
	\end{equation}
	generates the quantum \(\textup{E}(2)\) group in the sense of~\cite{W1995,W1991b}. Equation~\eqref{eqn:Eq2-gen} can be realized 
	concretely as operators on \(\Hils[L]\) given by 
	\begin{equation}
	  \label{eq:v-n-rep}
	v e_{ij}=e_{i-1,j} \quad \text{ and }\quad  n e_{ij}=q^i e_{i,j+1}, 
	\end{equation}
	see~\cite[Equation (1.6)]{RR2021}. In fact, any Hilbert space realization of \((v,n)\) is either 1-dimensional or infinite dimensional, 
	and the direct integral of all infinite dimensional irreducible representations of \((v,n)\) is unitarily equivalent to~\eqref{eq:v-n-rep}, 
	see~\cite{W1991b}*{Sec. 3(B)}. Therefore, \eqref{eq:v-n-rep} defines a faithful 
	nondegenerate \Star{}representation of \(\Contvin(\qE)\) on \(\Hils[L]\).
	
      	The braided multiplicative unitary \(\BrMultunit\) generating the braided quantum \(\textup{E}(2)\) groups is described in terms of \(v,n\) and an additional unitary \(P\in\U(\Hils[L])\) defined by \(P e_{i,j}=\zeta^{-j}e_{i,j}\). In order to construct the braided multiplicative unitary corresponding to the dual quantum group, we first define
       the operators \(N\) and \(b\)  on \(\Hils[L]\) by
    	\begin{equation}
      	   \label{def:N_and_b}
      	       N e_{i,j}=(i+j)e_{i,j}, \qquad b e_{i,j}=q^{\frac{j-i}{2}} e_{i-1,j-1}.
    	\end{equation}
	Then \(N\) is selfadjoint with spectrum \(\Z\), \(b\) is normal and injective, implying that it is invertible.
    	Moreover, using the action of action of \(n^{-1}\) on the basis elements given by \(n^{-1}e_{i,j}=q^{-i}e_{i,j-1}\),
	 one can easily check that  
	\begin{equation}
	  \label{eq:vn-rel-Nb}
	   n^{-1}v=q^{-\frac{N}{2}}b.
	\end{equation}
    
    	Referring the operator \(N + 2I\), where \(I\) is the identity operator on \(\Hils[L]\), 
	as \(\tilde{N}\) throughout this article, we define the operators \(\hat{X}\) and \(\DuCorep{Y}\) 
    	on \(\Hils[L]\otimes \Hils[L]\) by
    	\begin{align*}
       	    \hat{X} &\defeq Pb^{-1}q^{\frac{\tilde{N}}{2}}\otimes q^{\frac{\tilde{N}}{2}}b,\\
            \DuCorep{Y}&\defeq (v\otimes 1)^{1\otimes N}.
    	\end{align*} 
        Then \(\DuCorep{Y}\) is a unitary and \(\hat{X}\) is a closed operator with spectrum 
        \(\Sp(\hat{X})\) contained in \(\Specn\). Additionally, on the basis elements the actions are 
        given by
    	\(\hat{X}e_{i,j}\otimes e_{k,l}=\zeta^{-j-1}q^{i+l+1} e_{i+1,j+1}\otimes e_{k-1,l-1}\)
    	and
    	\(\DuCorep{Y}e_{i,j}\otimes e_{k,l}=e_{i-k-l,j}\otimes e_{k,l}\).
	Recall the manageable braided multiplicative unitary \(\BrMultunit\) generating 
	the braided quantum \(\textup{E}(2)\) groups from~\cite[Theorem 4.6]{RR2021}. Then
	with these operators at our disposal
	we construct the corresponding dual braided multiplicative unitary in the next theorem.

	Also recall the quantum exponential function \(\QuantExp\colon \Specn\to\T\) defined 
	in~\cite[Equation 1.2]{W1992a} by
	\[
  	  \QuantExp(\lambda)=
     	    \begin{cases}
     	      \prod_{k=0}^{\infty}\frac{1+\Mod{q}^{2k}\conj{\lambda}}{1+{\Mod{q}}^{2k}\lambda}, 
	      		&\text{ if } k\in \Specn\setminus\{-\Mod{q}^{-2k}\mid k=0,1,2,\ldots\}\\
     		-1, 	&\text{ otherwise.}
     	   \end{cases}
	\]
	Since \(\QuantExp\) is a unitary multiplier of \(\Contvin(\Specn)\) we observe that 
	\(\QuantExp(\hat{X})\in\Bound(\Hils[L]\otimes \Hils[L])\).

   	\begin{theorem}
     	   \label{the:DuMultunit-main}
   		The operator~\(\DuBrMultunit\defeq\QuantExp(\hat{X})^*\DuCorep{Y}\) is a 
   		manageable braided multiplicative unitary on \(\Hils[L]\otimes \Hils[L]\) over \(\T\) 
		with respect to \((\corep{U},\DuRmat)\).
   	\end{theorem}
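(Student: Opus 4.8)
The plan is to produce an explicit candidate for the dual braided multiplicative unitary by applying the recipe of Theorem~\ref{the:manag-DuBrMultunit} to the known braided multiplicative unitary \(\BrMultunit\) of~\cite[Theorem 4.6]{RR2021}, and then to recognize the result as \(\QuantExp(\hat X)^*\DuCorep Y\). Concretely, I would first recall the explicit form of \(\BrMultunit\) generating braided \(\qE\); it is built from the quantum exponential function evaluated at a suitable operator together with a "translation" unitary, say \(\BrMultunit=\QuantExp(X)^*\Corep{Y}_0\) for appropriate \(X\) and \(Y_0\) on \(\Hils[L]\otimes\Hils[L]\) (the precise operators coming from~\cite{RR2021}). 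Then, using \(\DuBrMultunit\defeq\Dualbraiding{}{}\BrMultunit^*\Dualbraiding{}{}^*\) with the dual braiding \(\Dualbraiding{}{}=\widehat Z\circ\Flip\) determined by~\eqref{eq:Zhat-expr}, I would compute the conjugation term-by-term. Since \(\QuantExp\) is a continuous function of a normal operator, \(\Dualbraiding{}{}\QuantExp(X)^*\Dualbraiding{}{}^*=\QuantExp(\Dualbraiding{}{}X\Dualbraiding{}{}^*)^*\); so the computation reduces to identifying \(\Dualbraiding{}{}X\Dualbraiding{}{}^*\) and \(\Dualbraiding{}{}Y_0^*\Dualbraiding{}{}^*\) on basis vectors. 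The claim will be that \(\Dualbraiding{}{}X\Dualbraiding{}{}^*=\hat X\) and \(\Dualbraiding{}{}Y_0^*\Dualbraiding{}{}^*=\DuCorep Y\), after which \(\DuBrMultunit=\QuantExp(\hat X)^*\DuCorep Y\) follows, possibly up to commuting the two factors using the commutation relation between \(\hat X\) and \(\DuCorep Y\) (which can be read off from their actions \(\hat X e_{i,j}\otimes e_{k,l}=\zeta^{-j-1}q^{i+l+1}e_{i+1,j+1}\otimes e_{k-1,l-1}\) and \(\DuCorep Y e_{i,j}\otimes e_{k,l}=e_{i-k-l,j}\otimes e_{k,l}\)).

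Having obtained the formula, the remaining work is to verify the three structural properties: that \(\DuBrMultunit\) is unitary, that it satisfies the pentagon equation, and that it is a \emph{braided} multiplicative unitary over \(\T\) with respect to \((\corep U,\DuRmat)\) in the sense of~\cite[Definition 3.1]{RR2021}. Unitarity is immediate since \(\QuantExp(\hat X)\) is a unitary multiplier of \(\Contvin(\Specn)\) (hence \(\QuantExp(\hat X)\in\U(\Bound(\Hils[L]\otimes\Hils[L]))\)) and \(\DuCorep Y\) is unitary. For the pentagon and the braided-compatibility conditions, the cleanest route is \emph{not} to recompute everything from scratch but to invoke Theorem~\ref{the:manag-DuBrMultunit}: since \(\BrMultunit\) is a manageable braided multiplicative unitary over \(\T\) with respect to \((\corep U,\Rmat)\), the theorem directly yields that \(\DuBrMultunit=\Dualbraiding{}{}\BrMultunit^*\Dualbraiding{}{}^*\) is a manageable braided multiplicative unitary over \(\T\) with respect to \((\corep U,\DuRmat)\). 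Thus the role of the explicit formula \(\QuantExp(\hat X)^*\DuCorep Y\) is to make the dual concrete for later use (the construction of \(\dual{\qE}\)), not to re-derive its abstract properties.

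The one point requiring genuine care — and the step I expect to be the main obstacle — is confirming that the representation \(\corep U\) with respect to which \(\BrMultunit\) is manageable is the \emph{same} \(\corep U\) intended in the statement, and that the dual braiding \(\Dualbraiding{}{}\) used here is exactly the one induced by \(\DuRmat=\Flip(\Rmat^*)\) on the relevant Hilbert space \(\Hils[L]=\Hils\otimes\Hils\). This amounts to tracking the \(\T\)-action underlying \(\corep U\): from~\cite{RR2021} the action is implemented by the unitary \(P\) (with \(P e_{i,j}=\zeta^{-j}e_{i,j}\)) suitably tensored, so \(\corep U\) corresponds to the grading by the exponent of \(\zeta\), and one must check that conjugating by \(\widehat Z\circ\Flip\) for the \(\DuRmat\)-Heisenberg pair produces precisely the operators \(\hat X\) and \(\DuCorep Y\) displayed above rather than some twisted variants. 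Once this bookkeeping is pinned down on basis vectors, the identification is a finite computation: apply both \(\DuBrMultunit\) and \(\QuantExp(\hat X)^*\DuCorep Y\) to \(e_{i,j}\otimes e_{k,l}\), compare the resulting (operator-valued) coefficients using the defining series for \(\QuantExp\), and conclude equality on the total set of basis vectors, hence everywhere. I would present the proof by first stating the explicit \(\BrMultunit\), then recording the two conjugation identities \(\Dualbraiding{}{}X\Dualbraiding{}{}^*=\hat X\) and \(\Dualbraiding{}{}Y_0^*\Dualbraiding{}{}^*=\DuCorep Y\) with a one-line basis check each, then assembling \(\DuBrMultunit=\QuantExp(\hat X)^*\DuCorep Y\), and finally citing Theorem~\ref{the:manag-DuBrMultunit} for manageability and the braided multiplicative unitary axioms.
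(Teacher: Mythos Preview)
Your approach is correct and matches the paper's: it too computes \(\Dualbraiding{}{}\BrMultunit^{*}\Dualbraiding{}{}^{*}\) explicitly from the known formula \(\BrMultunit=\QuantExp(vn\otimes n^{-1}vP)\Corep{Y}\) (note: no star on \(\QuantExp\)), reduces via basis-vector checks to \(\QuantExp(\hat X)^{*}\DuCorep Y\), and then cites Theorem~\ref{the:manag-DuBrMultunit} for manageability (the paper also remarks that the braided pentagon can be verified directly as in~\cite[Proposition 4.8]{RR2021}). Be aware that conjugating the argument of \(\QuantExp\) by \(\Dualbraiding{}{}\) alone does \emph{not} give \(\hat X\); one first lands on \(\DuCorep Y\,\QuantExp(Pvn\otimes q^{-N/2}b)^{*}\), and the commutation with \(\DuCorep Y\) you flagged as ``possibly'' needed is in fact the essential step, together with the rewriting \(v^{*}n=b^{-1}q^{N/2}\), that produces \(\hat X=Pb^{-1}q^{\tilde N/2}\otimes q^{\tilde N/2}b\).
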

 
 	\begin{proof}
 		The dual braiding induced by the dual  \(\Rmattxt\)\nb-matrix \(\DuRmat\) is given by
 		\(\Dualbraiding{}{}=\hat{Z}\circ \Flip\). 
 		Equation~\eqref{eq:Zhat-expr} and a computation analogous to~\cite[Equation 4.2]{RR2021} 
		indicate that \(\hat{Z}\) operates on \(\Hils[L]\otimes \Hils[L]\) by
 		\[
   		   \hat{Z} e_{ij}\otimes e_{kl}=\zeta^{jl} e_{ij}\otimes e_{kl}.
 		\]
 		A simple computation yields
 		\(\hat{Z}(vn\otimes n^{-1}vP)\hat{Z}^*=Pvn\otimes n^{-1}v\). Indeed, 
 		\begin{align*}
   		    &\hat{Z}(vn\otimes n^{-1}vP)\hat{Z}^* e_{i,j}\otimes e_{k,l} \\
   		    &\quad=\zeta^{-jl}\hat{Z}(vn\otimes n^{-1}vP)e_{i,j}\otimes e_{k,l}\\
   		    &\quad=\zeta^{-jl-l}\hat{Z}q^{i}e_{i-1,j+1}\otimes q^{-k+1}e_{k-1,l-1}\\
   		    &\quad=\zeta^{-jl-l+(j+1)(l-1)}q^{i}e_{i-1,j+1}\otimes q^{-k+1}e_{k-1,l-1}\\
  		    &\quad=\zeta^{-j-1}q^{i}e_{i-1,j+1}\otimes q^{-k+1}e_{k-1,l-1}\\
   		    &\quad=(Pvn\otimes n^{-1}v)e_{ij}\otimes e_{kl}
 		\end{align*}
 		Since by definition \(\DuCorep{Y}\) shifts only the first factor of the basis vectors, 
 		\(\hat{Z}\) commutes with \(\DuCorep{Y}\). 
 		Now,
		\begin{align*}
   		    \DuCorep{Y}(Pvn\otimes n^{-1}v)\DuCorep{Y}^* e_{i,j}\otimes e_{k,l}
       		   &=\DuCorep{Y}(Pvn\otimes n^{-1}v) e_{i+k+l,j}\otimes e_{k,l} \\
       	 	   &=\DuCorep{Y} \zeta^{-j-1}q^{i+k+l} e_{i+k+l-1,j+1}\otimes q^{-k+1}e_{k-1,l-1} \\
       		   &= \zeta^{-j-1}q^{i} e_{i+1,j+1}\otimes q^{l+1}e_{k-1,l-1}\\
       		   &= (Pv^{*}n\otimes q^2 q^{\frac{N}{2}}b) e_{i,j}\otimes e_{k,l}.
 		\end{align*}
 		From~\cite[Theorem 4.6]{RR2021}, we have~\(\BrMultunit{=}\QuantExp(n^{-1}vP \otimes vn)\Corep{Y}\).
		Here, \(\Corep{Y}\) is the (multiplicative) unitary on \(\Hils[L]\otimes \Hils[L]\) 
		defined by \( \Corep{Y} e_{ij}\otimes e_{kl}= e_{ij}\otimes e_{k+i+j,l}\).
 		Consequently, by combining the previous computations with the definition of~\(\DuBrMultunit\), 
		we have
 		\begin{align*}
   		   \DuBrMultunit 
		     &= \Dualbraiding{}{} \BrMultunit^{*} \Braiding{}{}=\hat{Z} \Flip\BrMultunit^*\Flip \hat{Z}^*\\
                      &=\hat{Z} \DuCorep{Y}\QuantExp(vn \otimes n^{-1}vP)^* \hat{Z}^*\\
                      &=\DuCorep{Y}\QuantExp(Pvn\otimes q^{-\frac{N}{2}}b)^{*}\\
                      &=\QuantExp(Pv^*n\otimes q^2 q^{\frac{N}{2}}b)^{*}\DuCorep{Y}.
 		\end{align*}
		Using equation~\eqref{eq:vn-rel-Nb} we get the desired result.
% 		It is now sufficient to show that~\(v^*n=b^{-1}q^{\frac{N}{2}}\).
% 		Since \(v\) is unitary, it is easy to see that \(v^*n=b^{-1}q^{\frac{N}{2}}\). 
%		As a result, the equality follows.
 
		By employing a technique similar to that described in~\cite[Proposition 4.8]{RR2021} 
		for \(\lambda = 1\), we see that \(\DuBrMultunit\) satisfies the braided pentagon 
		equation given below:
 		\begin{equation}
  		    \label{eq:DuBraided-pentagon}
 		      \DuBrMultunit_{23} \DuBrMultunit_{12} 
		      =\DuBrMultunit_{12}\Dualbraiding{}{}_{23} \DuBrMultunit_{12} \Dualbraiding{}{}_{23}^*  \DuBrMultunit_{23}.
		\end{equation}
		Since \(\BrMultunit\) is manageable by~\cite[Theorem 4.6]{RR2021}, 
		\(\DuBrMultunit\) is manageable by Theorem~\ref{the:manag-DuBrMultunit}.
 	\end{proof}
 
\subsection{The \(\Cst\)-algebra \(\Contvin(\dual{\qE})\)}
   \label{sec:Du-Eq2-cstalg}

 	Let~\(C\) be the \(\Cst\)\nb-algebra generated by an element \(\xi\) subject to the commutation relation 
 	\(\xi^* \xi=\Mod{q}^{-2}\xi \xi^*\) and \(\Sp(\xi) \subseteq \Specn\). Then by defintion, \(C\) is the closed linear 
	span of the set of all finite linear combinations of the form
 	\(\sum_{k}\Ph{\xi}^k g_{k}(\Mod{\xi})\) where \(\xi=\Ph{\xi}\Mod{\xi}\) is the polar decomposition 
	of \(\xi\), \(k\) runs over a finite subset of \(\Z\) and 
 	\(g_{k}\in\Contvin(\Sp(\Mod{\xi}))\) for all \(k\neq 0\) and \(g_0=0\).
 	Define \(\alpha\colon \T\to \Aut(C)\) by \(\alpha_{z}(g)(t)=g(zt)\) for \(g\in\Contvin(\Sp(\Mod{\xi}))\) and \(\alpha_{z}\Ph{\xi}=z\Ph{\xi}\).
 	Then the \(\Cst\)\nb-algebra \(\Contvin(\dual{\qE})\) of continuous functions vanishing at infinity 
	on \(\widehat{\qE}\) is isomorphic to the crossed product \(\Cst\)\nb-algebra \(C\rtimes \T\).
%	\Atiburcomment{Describe how the elements of C can be viewed as functions \(f\)..}

 	Let \(\tilde{b}\) denote the operator \(q^{\frac{\tilde{N}}{2}}b\).
	Let~ \(\tilde{b}=\Ph{\tilde{b}}\Mod{\tilde{b}}\) and \(q=\Ph{q}\Mod{q}\) be the polar decompositions of \(\tilde{b}\) and \(q\)
	respectively. 
	Then the actions of 
 	\(\Ph{\tilde{b}}\) and \(\Mod{\tilde{b}}\) on \(\Hils[L]\) are given by 
	\(\Ph{\tilde{b}}e_{ij}=\Ph{q}^{j-1} e_{i-1,j-1}\) and 
 	\(\Mod{\tilde{b}} e_{ij}=\Mod{q}^{j-1}e_{ij}\) respectively. Hence, it follows that 
 	\(\Ph{\tilde{b}}\Mod{\tilde{b}}\Ph{\tilde{b}}^*=\Mod{q}\Mod{\tilde{b}}\). 
	Consequently, we have \(\tilde{b}^*\tilde{b}=\Mod{q}^{-2}\tilde{b}\tilde{b}^*\).
 	Note that the spectrum of \(\tilde{b}\) is contained in \(\Specn\).
	By the universal property of the \(\Cst\)\nb-algebra generated by a finite set of elements, 
	there exists a \Star{}homomorphism
 	\[
   	  \sum_{k} \Ph{\xi}^{k} g_{k}(\Mod{\xi})\mapsto\sum_{k} \Ph{\tilde{b}}^{k} g_{k}(\Mod{\tilde{b}}).  
 	\]
	Clearly, \(\xi\mapsto \tilde{b}\) gives a representation of \(C\) on \(\Hils[L]\). Then \(\tilde{b}\) generates the \(\Cst\)\nb-algebra in the sense of~\cite{W1995}. 
%	Define \(\alpha\colon \T\to \Aut(\Contvin(\Specn))\) by \(\alpha_{z}(f)(\lambda)=f(z\lambda)\).
% 	Then the \(\Cst\)\nb-algebra \(\Contvin(\dual{\qE})\) of continuous functions vanishing at infinity 
%	on \(\widehat{\qE}\) is isomorphic to the crossed product \(\Cst\)\nb-algebra \(\Contvin(\Specn)\rtimes_{\alpha} \T\).
  	Therefore, \(\tilde{b}\) is affiliated with \(C\).
	 Since, \(\T\) is commutative, \(C\rtimes \T\) contains
  	\(\Contvin(\Z)\) as a subalgebra and by definition \(N\Aff \Contvin(\Z)\).
	
	One can see that \(N\) and \(\tilde{b}\) separate representations of \(\Contvin(\dual{\qE})\) and that 
	\((1+N^2)^{-1}(1+\tilde{b}^*\tilde{b})^{-1}\) is in \(\Contvin(\dual{\qE})\). Thus we have the folowing
	
 	\begin{proposition}
 	  \label{prop:Du-qE-Cstalg}
 		The \(\Cst\)\nb-algebra \(\Contvin(\dual{\qE})\) is generated by \(N\) and \(\tilde{b}\).
 	\end{proposition}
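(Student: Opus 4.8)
The plan is to read the statement off from the crossed-product description \(\Contvin(\dual{\qE})\cong C\rtimes_{\alpha}\T\) together with the two affiliation facts already obtained above, namely that \(N\) is affiliated with the canonical copy of \(\Contvin(\Z)\) in \(\Contvin(\dual{\qE})\) and that \(\tilde{b}\) is affiliated with the canonical copy of \(C\). Since \(\T\) is amenable, the full and reduced crossed products coincide, and by construction \(C\rtimes_{\alpha}\T\) is the closed linear span of the products of the canonical copies of \(C\) and of the group \(\Cst\)\nb-algebra \(\Cst(\T)\); for the compact abelian group \(\T\) one has \(\Cst(\T)\cong\Contvin(\widehat{\T})=\Contvin(\Z)\), which is precisely the subalgebra \(\Contvin(\Z)\subseteq C\rtimes_{\alpha}\T\) recorded above. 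So \(\Contvin(\dual{\qE})\) is generated, as a \(\Cst\)\nb-algebra in the sense of~\cite{W1995}, by the canonical copies of \(C\) and \(\Contvin(\Z)\), and since generation of \(\Cst\)\nb-algebras by affiliated elements is transitive, it will be enough to prove that \(N\) generates \(\Contvin(\Z)\) and that \(\tilde{b}\) generates \(C\).

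The first of these is immediate: \(N\) is selfadjoint with \(\Sp(N)=\Z\), and \(\Z\) being discrete, each spectral projection \(\delta_{m}(N)\), \(m\in\Z\), lies in the \(\Cst\)\nb-subalgebra generated by \(N\); as the functions \(\delta_{m}\) have dense linear span in \(\Contvin(\Z)=c_{0}(\Z)\), that subalgebra is all of \(\Contvin(\Z)\).

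For the second point, the discussion preceding the statement already records that \(\tilde{b}\) is normal and invertible with \(\Sp(\tilde{b})=\Specn\), that its polar part \(\Ph{\tilde{b}}\) is unitary with \(\Sp(\Mod{\tilde{b}})=\Mod{q}^{\Z}\cup\{0\}\), and that \(\Ph{\tilde{b}}\,\Mod{\tilde{b}}\,\Ph{\tilde{b}}^{*}=\Mod{q}\,\Mod{\tilde{b}}\), equivalently \(\tilde{b}^{*}\tilde{b}=\Mod{q}^{-2}\tilde{b}\tilde{b}^{*}\); these are exactly the defining relations of the generator \(\xi\) of \(C\) together with the spectral condition \(\Sp(\xi)=\Specn\). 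The universal property of \(C\) thus produces the \Star{}homomorphism \(\sum_{k}\Ph{\xi}^{k}g_{k}(\Mod{\xi})\mapsto\sum_{k}\Ph{\tilde{b}}^{k}g_{k}(\Mod{\tilde{b}})\) used above, whose image is the \(\Cst\)\nb-subalgebra generated by \(\tilde{b}\); what remains is to see that this \Star{}homomorphism is \emph{faithful}, so that \(\tilde{b}\) generates \(C\) and not merely a proper quotient, and I expect this to be the main obstacle. I would settle it as in the concrete realizations of~\cite{RR2021}: on each of the invariant subspaces \(\operatorname{span}\{e_{ij}\otimes e_{kl}\colon i-j\ \text{fixed}\}\), \(\Ph{\tilde{b}}\) acts as a weighted shift and \(\Mod{\tilde{b}}\) as a diagonal operator whose eigenvalues exhaust \(\Mod{q}^{\Z}\), so that \(\xi\mapsto\tilde{b}\) is, up to amplification, the faithful universal model of \(C\); one should also check that under the isomorphism \(\Contvin(\dual{\qE})\cong C\rtimes_{\alpha}\T\) the subalgebras \(C\) and \(\Contvin(\Z)\) indeed correspond to the \(\Cst\)\nb-algebras generated by \(\tilde{b}\) and by \(N\), which is what the preceding paragraph supplies. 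Granting these points, the two claims combine with the reduction in the first paragraph to give \(\Contvin(\dual{\qE})=\Cst(N,\tilde{b})\). One could alternatively proceed directly, showing along the lines of~\cite{RR2021} that \(\DuBrMultunit\) is a unitary multiplier of \(\Comp(\Hils[L])\otimes\Cst(N,\tilde{b})\) whose slices \((\omega\otimes\Id_{\Comp(\Hils[L])})\DuBrMultunit\) exhaust \(\Cst(N,\tilde{b})\), since in \(\DuBrMultunit=\QuantExp(\hat{X})^{*}\DuCorep{Y}\) the right-leg data involve only \(\tilde{b}\) and the spectral projections of \(N\); but the crossed-product argument above is shorter.
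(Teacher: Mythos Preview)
Your approach is essentially the paper's own: the proposition is presented as a summary of the preceding discussion, which sets up the crossed-product description \(\Contvin(\dual{\qE})\cong C\rtimes\T\), verifies that \(\tilde{b}\) satisfies the defining relations of the generator \(\xi\) (whence \(\tilde{b}\Aff C\)), and records \(N\Aff\Contvin(\Z)\subset C\rtimes\T\). You are in fact more thorough than the paper in spelling out the transitivity of generation and in flagging the faithfulness of the representation \(\xi\mapsto\tilde{b}\), both of which the paper leaves implicit; one small slip is that your invariant subspaces for \(\tilde{b}\) should live in \(\Hils[L]\) with basis \(\{e_{i,j}\}\), not in \(\Hils[L]\otimes\Hils[L]\).
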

  
 	Let~\(\dual{B} \defeq\left\{(\omega\otimes\Id)\DuBrMultunit \mid \omega\in\Bound(\Hils[L])_{*}\right\}^{\textup{CLS}}\).
	The general theory of manageable multiplicative unitaries ensures that \(\dual{B}\) is a \(\Cst\)\nb-algebra.
%	Using a similar argument as in \cite{RR2021} we observe that \(\DuBrMultunit\in\U(\Comp(\Hils[L])\otimes \Contvin(\dual{\qE}))\).
%	Therefore by definition of \(\dual{B}\), we get \(\dual{B}\subseteq\Mult(\Contvin(\dual{\qE}))\). 
 	It is already observed that \(v^*n=b^{-1}q^{\frac{N}{2}}\). Now, \(n\) is a closed unbounded operator, implies \(n\Aff \Comp(\Hils[L])\). Also, since \(v\) and \(P\) are unitaries, 
%	since \( \QuantExp\in\Mult(\Contvin(\Specn))\) and the \(z\)\nb-transform \(z_{Pv^*n}=Pv^*z_{n}\),
	we obtain \(Pv^*n\Aff \Comp(\Hils[L])\). Moreover, \(\tilde{b}\) generates \(\Contvin(\dual{\qE})\) and therefore 
	\(Pb^{-1}q^{\frac{N}{2}}\otimes q^2 q^{\frac{N}{2}}b  \Aff \Comp(\Hils[L])\otimes \Contvin(\dual{\qE})\). 
%	\Atiburcomment{Elaborate the refernce of z-transform..}
	Consequently~\(\QuantExp(Pb^{-1}q^{\frac{\tilde{N}}{2}}\otimes q^{\frac{\tilde{N}}{2}}b)
	\in\U(\Comp(\Hils[L])\otimes \Contvin(\dual{\qE}))\). 
	Also observe that \(\Corep{Y}\) is a manageable multiplicative unitary that generates \(\T\) 
	as a quantum group. Hence, its dual \(\DuCorep{Y}\in\U(\Hils[L]\otimes\Hils[L])\) is also a 
	manageable multiplicative unitary and generates the dual \(\Z\) of \(\T\) as a quantum group. 
	Therefore, 
	\(\DuCorep{Y}\in\U(\Comp(\Hils[L])\otimes \Contvin(\Z))\subset \U(\Comp(\Hils[L])\otimes \Contvin(\dual{\qE}))\). 
	As a consequence, we have
	\(\DuBrMultunit\in\U(\Comp(\Hils[L])\otimes \Contvin(\dual{\qE}))\) and therefore, by definition of \(\dual{B}\), we have 
	\(\dual{B}\subseteq\Mult(\Contvin(\dual{\qE}))\). Moreover, 
	\(\DuBrMultunit(\Comp(\Hils[L])\otimes \Contvin(\dual{\qE}))=\Comp(\Hils[L])\otimes \Contvin(\dual{\qE})\) implies
	\begin{align*}
 	  \dual{B}\Contvin(\dual{\qE}) 
	      	&=\{(\omega\otimes\Id)\DuBrMultunit (1_{\Comp(\Hils[L])}\otimes \hat{b})
 			\mid \omega\in\Bound(\Hils[L])_{*}, \hat{b}\in \Contvin(\dual{\qE})\}^{\textup{CLS}} \\
       		&=\{(\omega\otimes\Id)\DuBrMultunit (m\otimes \hat{b})
			\mid m\in\Comp(\Hils[L]), \omega\in\Bound(\Hils[L])_{*}, \hat{b}\in \Contvin(\dual{\qE})\}^{\textup{CLS}} \\ 
       		&=\{(\omega\otimes\Id)(m\otimes \hat{b})
			\mid m\in\Comp(\Hils[L]), \omega\in\Bound(\Hils[L])_{*}, \hat{b}\in \Contvin(\dual{\qE})\}^{\textup{CLS}} \\
         	&= \Contvin(\dual{\qE}).
	\end{align*}  

 	\begin{proposition}
 		The \(\Cst\)\nb-algebra \(\dual{B}\) coincides with \(\Contvin(\dual{\qE})\).
 	\end{proposition}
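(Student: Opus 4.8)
The preceding discussion exhibits $\dual B$ as a nondegenerate $\Cst$\nb-subalgebra of $\Mult(\Contvin(\dual{\qE}))$ with $\dual B\cdot\Contvin(\dual{\qE})=\Contvin(\dual{\qE})$, but this alone does not give equality, so genuine extra input is needed. The plan is to reduce the proposition to the two affiliation relations $N\Aff\dual B$ and $\tilde b\Aff\dual B$. Granting these, Proposition~\ref{prop:Du-qE-Cstalg} and the fact that $\dual B$ is a nondegenerate $\Cst$\nb-subalgebra of $\Bound(\Hils[L])$ (Theorem~\ref{the:DuBQgrp-quasitriag}(1) applied to $\DuBrMultunit$) yield $\Contvin(\dual{\qE})=\Cst(N,\tilde b)\subseteq\dual B$; moreover $N\Aff\dual B$ makes $\Contvin(\Z)\cdot\dual B$ dense in $\dual B$, so $\Contvin(\dual{\qE})\cdot\dual B$ is dense in $\dual B$; and $\dual B\subseteq\Mult(\Contvin(\dual{\qE}))$ forces $\Contvin(\dual{\qE})\cdot\dual B\subseteq\Contvin(\dual{\qE})\cap\dual B=\Contvin(\dual{\qE})$, so $\dual B\subseteq\Contvin(\dual{\qE})$ as well; hence $\dual B=\Contvin(\dual{\qE})$.

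The two affiliation relations are read off from the factorisation $\DuBrMultunit=\QuantExp(\hat{X})^{*}\DuCorep{Y}$ of Theorem~\ref{the:DuMultunit-main}. The unitary $\DuCorep{Y}=(v\otimes1)^{1\otimes N}$ is a manageable multiplicative unitary on $\Hils[L]$, being the dual of the multiplicative unitary $\Corep{Y}$ generating $\T$; hence $\DuCorep{Y}\in\U(\Comp(\Hils[L])\otimes\Contvin(\Z))$ and its right leg $\{(\omega\otimes\Id)\DuCorep{Y}\mid\omega\in\Bound(\Hils[L])_{*}\}^{\textup{CLS}}$ is $\Contvin(\Z)$, the $\Cst$\nb-algebra generated by $N$. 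On the other side $\hat{X}=Pb^{-1}q^{\frac{\tilde{N}}{2}}\otimes q^{\frac{\tilde{N}}{2}}b=Pb^{-1}q^{\frac{\tilde{N}}{2}}\otimes\tilde b$ has $\tilde b$ as its second tensor leg, and since $\QuantExp$ is a unitary multiplier of $\Contvin(\Specn)$ and the $z$\nb-transform of $\hat{X}$ is controlled by $\tilde b$ on that leg, the computation carried out for the $\QuantExp$\nb-factor of $\BrMultunit$ in~\cite{RR2021} gives $\QuantExp(\hat{X})\in\U(\Comp(\Hils[L])\otimes\Cst(\tilde b))$ with right leg the $\Cst$\nb-algebra generated by $\tilde b$. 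Together these two right legs generate $\Cst(N,\tilde b)=\Contvin(\dual{\qE})$.

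The one delicate point is to transfer these leg computations through the product, that is, to show that the right leg of $\DuBrMultunit=\QuantExp(\hat{X})^{*}\DuCorep{Y}$ is not smaller than the $\Cst$\nb-algebra generated by the right legs of its two factors, so that $N$ and $\tilde b$ are affiliated with $\dual B$ itself and not merely with $\Contvin(\dual{\qE})$. I expect this to be the main obstacle, and would resolve it exactly as the identification of $\Contvin(\qE)$ with the right leg of $\BrMultunit$ was resolved in~\cite{RR2021}: using $\DuCorep{Y}=\QuantExp(\hat{X})\DuBrMultunit$ and $\QuantExp(\hat{X})^{*}=\DuBrMultunit\,\DuCorep{Y}^{*}$, one slices the outer legs of the braided pentagon equation~\eqref{eq:DuBraided-pentagon} for $\DuBrMultunit$ (together with the ordinary pentagon equation for $\DuCorep{Y}$) against functionals in $\Bound(\Hils[L])_{*}$, and invokes the manageability of $\DuBrMultunit$ from Theorem~\ref{the:DuMultunit-main}, to conclude that $f(N)\,\hat b$ and $g(\tilde b)\,\hat b$ lie in $\dual B$ for all $\hat b\in\dual B$, $f\in\Contvin(\Z)$ and $g\in\Contvin(\Specn)$, with $\Contvin(\Z)\cdot\dual B$ dense in $\dual B$; a standard slicing argument in the spirit of~\cites{BS1993,W1996} then upgrades this to $N\Aff\dual B$ and $\tilde b\Aff\dual B$, which completes the proof via the reduction above. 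Alternatively, one may bypass the abstract step and compute $(\omega\otimes\Id)\DuBrMultunit$ explicitly on the rank-one functionals associated with the orthonormal basis $\{e_{i,j}\}$ of $\Hils[L]$, checking directly that these slices are contained in, and span a dense subset of, $\Cst(N,\tilde b)$.
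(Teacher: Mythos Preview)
Your overall plan matches the paper's: reduce to showing \(N\Aff\dual B\) and \(\tilde b\Aff\dual B\), and use the braided pentagon equation to separate the two factors of \(\DuBrMultunit=\QuantExp(\hat X)^{*}\DuCorep{Y}\). Where your proposal diverges is precisely at the point you flag as ``delicate'': your resolution (slice outer legs, invoke manageability, cite~\cite{RR2021} for the analogue) does not describe the mechanism that actually extracts the affiliations, and manageability plays no role in this step of the paper's argument.

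The paper's device is a one-parameter deformation. Rewriting the pentagon equation as \(\DuBrMultunit^{*}_{12}\DuBrMultunit_{23}\DuBrMultunit_{12}\DuBrMultunit^{*}_{23}=\Dualbraiding{}{}_{23}\DuBrMultunit_{12}\Dualbraiding{}{}_{23}^{*}\) and computing the right side explicitly gives \(\QuantExp(Pb^{-1}q^{\tilde N/2}\otimes P^{*}\otimes\tilde b)^{*}\DuCorep{Y}_{13}\). One then replaces \(\DuBrMultunit\) in the \(12\)-legs by \(\DuBrMultunit^{\lambda}=\QuantExp(\lambda\hat X)^{*}\DuCorep{Y}\) for \(\lambda\in\Specn\); the same computation (this is the content of~\cite[Proposition 4.8]{RR2021}) yields
\[
(\DuBrMultunit^{\lambda})^{*}_{12}\DuBrMultunit_{23}\DuBrMultunit^{\lambda}_{12}\DuBrMultunit^{*}_{23}
=\QuantExp(\lambda\,Pb^{-1}q^{\tilde N/2}\otimes P^{*}\otimes\tilde b)^{*}\DuCorep{Y}_{13}.
\]
The left side lies in \(\U(\Comp(\Hils[L])\otimes\Comp(\Hils[L])\otimes\dual B)\) for every \(\lambda\) because \(\DuBrMultunit_{23}\in\U(\Comp(\Hils[L])\otimes\Comp(\Hils[L])\otimes\dual B)\) and the \(12\)-factors act only on the first two legs. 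Strict continuity in \(\lambda\) then allows one to invoke~\cite[Proposition 5.2]{W1992a}, which is the crucial external input: it says that a strictly continuous family \(\lambda\mapsto\QuantExp(\lambda T)\) of unitary multipliers forces \(T\) to be affiliated. This gives \(\tilde b\Aff\dual B\). Setting \(\lambda=0\) isolates \(\DuCorep{Y}_{13}\in\U(\Comp(\Hils[L])\otimes\Comp(\Hils[L])\otimes\dual B)\), whence \(N\Aff\dual B\) since the right slices of \(\DuCorep{Y}\) generate \(\Contvin(\Z)\).

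Neither of the two moves you propose --- slicing the pentagon equation against functionals, or direct computation of \((\omega\otimes\Id)\DuBrMultunit\) on rank-one functionals --- produces the affiliation \(\tilde b\Aff\dual B\) without this \(\lambda\)-family argument and~\cite[Proposition 5.2]{W1992a}. The issue is that affiliation of an unbounded operator is not something one reads off from individual slices; one needs the whole family \(\QuantExp(\lambda\,\cdot)\) inside the multiplier algebra.
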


 \begin{proof}  
   It is enough to show that \(N\) and \(\tilde{b}\) are affiliated with \(\dual{B}\). 
   The braided pentagon equation~\eqref{eq:DuBraided-pentagon} of \(\DuBrMultunit\) implies that
   \begin{equation}
   \DuBrMultunit^{*}_{12}\DuBrMultunit_{23}\DuBrMultunit_{12}\DuBrMultunit^{*}_{23}=
   \Dualbraiding{}{}_{23}\DuBrMultunit_{12}\Dualbraiding{}{}_{23}^*.
   \end{equation}
   The unitary \(\DuCorep{Y}\) commutes with \(\hat{Z}\), and we can infer from 
   the proof of Theorem~\ref{the:DuMultunit-main} that
   \begin{equation}
   \hat{Z}(1\otimes q^{\frac{\tilde{N}}{2}}b)\hat{Z}^* = P\otimes q^{\frac{\tilde{N}}{2}}b.
   \end{equation}
   Using this observation we compute
   \begin{equation*}
     \begin{aligned}
       \Dualbraiding{}{}_{23}\DuBrMultunit_{12}\Dualbraiding{}{}_{23}^*
       &=\Flip_{23} Z_{23}^*\QuantExp(Pb^{-1}q^{\frac{\tilde{N}}{2}}\otimes q^{\frac{\tilde{N}}{2}}b)_{12}^*\DuCorep{Y}_{12}Z_{23}\Flip_{23}\\
       &=  \hat{Z}_{23}\QuantExp(Pb^{-1}q^{\frac{\tilde{N}}{2}}\otimes  q^{\frac{\tilde{N}}{2}}b)_{13}^*\DuCorep{Y}_{13}
       		\hat{Z}_{23}^{*}\\
       &= \QuantExp(Pb^{-1}q^{\frac{\tilde{N}}{2}}\otimes P^* \otimes  q^{\frac{\tilde{N}}{2}}b)^*\DuCorep{Y}_{13}.
     \end{aligned}
   \end{equation*}
   Let \(\DuBrMultunit^{\lambda}
   	   =\QuantExp({\lambda}Pb^{-1}q^{\frac{\tilde{N}}{2}}\otimes q^{\frac{\tilde{N}}{2}}b)^{*}\DuCorep{Y}\) 
   and 
   \(S'(\lambda)=\QuantExp(\lambda Pb^{-1}q^{\frac{\tilde{N}}{2}}\otimes P^* \otimes q^{\frac{\tilde{N}}{2}}b)^*\DuCorep{Y}_{13}\). Then using the above analysis we have,
  \begin{equation}
    \label{eq:Apr27-9:38}
     (\DuBrMultunit^{\lambda})^{*}_{12} \DuBrMultunit_{23}\DuBrMultunit^{\lambda}_{12}\DuBrMultunit^{*}_{23}
     =\Dualbraiding{}{}_{23}\DuBrMultunit^{\lambda}_{12}\Dualbraiding{}{}_{23}^*
%     = \QuantExp(\lambda Pb^{-1}q^{\frac{\tilde{N}}{2}}\otimes P^* \otimes q^{\frac{\tilde{N}}{2}}b)^*\DuCorep{Y}_{13} 
     = S'(\lambda)
  \end{equation}
	It follows that since the expression on the left hand side belongs to 
	\(\U(\Comp(\Hils[L])\otimes\Comp(\Hils[L])\otimes \dual{B})\), 
	\(\QuantExp(\lambda Pb^{-1}q^{\frac{\tilde{N}}{2}}\otimes P^* \otimes q^{\frac{\tilde{N}}{2}}b)^*\DuCorep{Y}_{13}\) 
	must also belong to \(\U(\Comp(\Hils[L])\otimes\Comp(\Hils[L])\otimes \dual{B})\) for all \(\lambda \in \Specn\).
 	Now \(\DuCorep{Y}\in\U(\Hils[L]\otimes \Hils[L])\) and as already observed
	\(\QuantExp(Pb^{-1}q^{\frac{\tilde{N}}{2}}\otimes q^{\frac{\tilde{N}}{2}}b)
	\in\U(\Comp(\Hils[L])\otimes \Contvin(\dual{\qE}))\), \(P\) is unitary implies that \(P\Aff\Comp(\Hils[L])\).
	Hence,  \(S'(\lambda)\) belongs to 
	\(\U(\Comp(\Hils[L])\otimes\Comp(\Hils[L])\otimes \dual{B})\) for all \(\lambda \in \Specn\). 
	Now, \(\{S'(\lambda)\}_{\lambda\in\Specn}\) constitute a strictly continuous family 
	of elements of \(\U(\Comp(\Hils[L])\otimes\Comp(\Hils[L])\otimes \dual{B})\) due to strict continuity of the map 
	\(\Specn\ni\lambda\to\QuantExp(\lambda Pb^{-1}q^{\frac{\tilde{N}}{2}}\otimes q^{\frac{\tilde{N}}{2}}b)
	\in\Mult(\Comp(\Hils[L])\otimes\dual{B})\).
	Therefore by~\cite[Proposition 5.2 ]{W1992a} we see 
	\(Pb^{-1}q^{\frac{\tilde{N}}{2}}\otimes P^{*}\otimes q^{\frac{\tilde{N}}{2}}b\  
	\Aff\  \Comp(\Hils[L])\otimes\Comp(\Hils[L])\otimes \dual{B}\). 
	Hence, \(q^{\frac{\tilde{N}}{2}}b\  \Aff\  \dual{B}\).

	Finally, note that \(S'(0)=\DuCorep{Y}_{13}\). Hence, \(\DuCorep{Y}\in\U(\Comp(\Hils[L])\otimes \dual{B})\). 
	Since \(\DuCorep{Y}\) generates \(\Z\) as a quantum group, the left slices 
	\(\{(\omega\otimes \Id_{\Comp(\Hils[L])})\DuCorep{Y}\}\) for 
	\(\omega\in\Bound(\Hils[L])_{*}\) are dense in \(\Contvin(\Z)\). 
	Moreover, by definition, \(\Sp(N)=\Z\) and \(\Contvin(\Z)\) is generated by \(N\Aff \Contvin(\Z)\). 
	Thus, we conclude that~\(N\Aff\dual{B}\) and this completes the proof.
  \end{proof}

\subsection{The Comultiplication map \(\DuComult[\qE]\)}
     \label{sec:Du-Eq2-comult} 
     In this section we provide the quantum group structure to the \(\Cst\)\nb-algebra constructed above 
     inside the category \(\Cstcat(\T)\).
     Let us recall the quantum group
     \(\T=(\Cont(\T),\Comult[\T])\), constructed from the manageable multiplicative unitary
      \(\Corep{W}\) on \(\Hils\otimes\Hils\) defined by \(\Corep{W}e_i\otimes e_j=e_i+e_{i+j}\), 
      see~\cite[Section 4]{RR2021},  can be viewed as a quasitriangular quantum group with respect to 
      the \(\Rmattxt\)\nb-matrix \(\Rmat\) that appears in section~\ref{sec:prelim}. 
      Also, we can readily see that  
     \(\Corep{U}=\Corep{W}_{23}\) acts on \(\Hils\otimes\Hils\otimes\Hils\cong\Hils[L]\otimes \Hils[H]\) by 
     \(\Corep{U}e_{i,j}\otimes e_k=e_{i,j}\otimes e_{j+k}\).
     
     A simple computation reveals that \(\Corep{U}(N\otimes 1)\Corep{U}^*=N\otimes 1\) and 
     \(\Corep{U}(\tilde{b}\otimes 1)\Corep{U}^*=\tilde{b}\otimes z^*\). Hence,
     the map \(\dual{\Gamma}\colon \Contvin(\dual{\qE})\to \Contvin(\dual{\qE})\otimes \Cont(\T)\) given by
     \[
     \dual{\Gamma}(N)= N\otimes 1, \qquad \dual{\Gamma}(\tilde{b})= \tilde{b}\otimes z^*
     \]
     yields a well defined action of \(\T\) on \(\Contvin(\dual{\qE})\). Therefore,
     \(\Contvin(\dual{\qE})\) is an object in \(\Cstcat(\T)\).
     
      Consider the monoidal product \(\boxtimes_{\DuRmat}\) on \(\Cstcat(\T)\) induced by the dual braiding 
      in~\(\Corepcat(\T)\) with respect to the dual \(\Rmattxt\)\nb-matrix \(\DuRmat\) and
      define the twofold twisted tensor product 
      \[
        \Contvin(\dual{\qE})\boxtimes_{\DuRmat}\Contvin(\dual{\qE})
        \defeq 
        (\Contvin(\dual{\qE})\otimes 1)\dual{Z}(1\otimes \Contvin(\dual{\qE}))\dual{Z}^*.
      \]
      The canonical embeddings 
      \(
        \dual{j_1},\dual{j_2}\in\Mor^{\T} (\Contvin(\dual{\qE}),\Contvin(\dual{\qE})\boxtimes_{\DuRmat}\Contvin(\dual{\qE}))
      \) 
      into the first and second factor of the twisted tensor product are given by:
%       \(\Contvin(\dual{\qE})\boxtimes_{\DuRmat}\Contvin(\dual{\qE})\) 
       	\begin{equation*}
	  \dual{j_1}(\hat{b})=\hat{b}\otimes 1,
	  \quad
	  \dual{j_2}(\hat{b})=\dual{Z}(1\otimes \hat{b})\dual{Z}^*
	  \quad \textup{ for all } \hat{b}\in\Contvin(\dual{\qE}).
	\end{equation*}
    	Then, explicitly, on the distinguished generators~\(N\) and \(\tilde{b}\), 
	the embeddings  \(\dual{j_1}\) and \(\dual{j_2}\) are expressed as
	\begin{align*}
	    \dual{j_1}(N) &=N\otimes 1, 
	    &\quad 
	    \dual{j_2}(N) &=1\otimes N, \\
	    \dual{j_1}(\tilde{b}) &=\tilde{b}\otimes 1, 
	    &\quad \dual{j_2}(\tilde{b}) &=P\otimes \tilde{b}.
	\end{align*}

	Define \(\DuComult[\qE](\hat{b})=\DuBrMultunit(\hat{b}\otimes 1)\DuBrMultunit^*\) 
	for all \(\hat{b}\in \Contvin(\dual{\qE})\). 
	Then it is sufficient to compute the action of \(\DuComult[\qE]\) on the distinguished generators 
	of \(\Contvin(\dual{\qE})\): \( N\) and \(\tilde{b}=q^{\frac{\tilde{N}}{2}}b\). 
%	We must exercise caution because we are dealing with unbounded operators. 
	We have already observed that \(\Sp(\hat{X})=\Specn\) and  
	have ~\(\QuantExp(\hat{X})\in\U(\Hils[L]\otimes\Hils[L])\).

 	Firstly, we observe that \(\DuComult[\qE](N)=\DuBrMultunit(N\otimes 1) \DuBrMultunit^{*}\). 
    	 Now,  \(\DuCorep{Y}(N\otimes 1)\DuCorep{Y}^*=N\otimes1\dotplus 1\otimes N\), 
	 where \(\dotplus\) denotes the closure of the sum of two normal operators. Therefore,
     	\begin{equation}
	  \label{eq:DuComult-on-N}
       	    \DuComult[\qE](N) = \QuantExp(\hat{X})^{*}(N\otimes 1\dotplus 1\otimes N)\QuantExp(\hat{X})
 	\end{equation}
	The commutation relations \(Nb=b(N-2I)\) and \(Nb^{-1}=b^{-1}(N+2I)\) together imply that 
	\(N\otimes 1+1\otimes N\) commutes with
	\(\hat{X}\). Consequently, the equation \eqref{eq:DuComult-on-N} reduces to
	\[
	  \DuComult[\qE](N) = N\otimes 1\dotplus 1\otimes N= \dual{j_1}(N) \dotplus \dual{j_2}(N).
	\]
	Secondly, again using the definitions, we have 
	\(\DuCorep{Y}(q^{\frac{\tilde{N}}{2}}b\otimes 1)\DuCorep{Y}^*=q^{\frac{\tilde{N}}{2}}b\otimes 1\).
	Therefore, substituting \(\hat{b}=\tilde{b}\) in the formula for \(\DuComult[\qE]\), we have
	\begin{equation}
  	  \label{eq:DuComult-on-tilde-b}
	    \DuComult[\qE](\tilde{b})=\DuBrMultunit(q^{\frac{\tilde{N}}{2}}b\otimes 1)\DuBrMultunit^*
	    =\QuantExp(\hat{X})^*(q^{\frac{\tilde{N}}{2}}b\otimes 1)\QuantExp(\hat{X}).
	\end{equation}
	Let~\(\Upsilon\) be a closed operator on \(\Hils[L]\) such that \(\ker(\Upsilon)=\{0\}\), the spectrum \(\Sp(\Upsilon)\subset\Specn\)
	and \(\Ph{\Upsilon}\Mod{\Upsilon}\Ph{\Upsilon}^*=q^{-1}\Mod{\Upsilon}\) where 
	\(\Upsilon=\Ph{\Upsilon}\Mod{\Upsilon}\) is the polar decomposition of \(\Upsilon\).
	Let \(R=\Upsilon\otimes b^*(\conj{q})^{\frac{\tilde{N}}{2}} \otimes 1\) and
 	\(S= \Upsilon \otimes (\conj{q})^{N}P^*\otimes b^*(\conj{q})^{\frac{\tilde{N}}{2}}\). Then, a simple computation shows that \(R\) and \(S\) 
	are normal operators having their spectra contained in \(\Specn\) and satisfy the relations in \cite{W1992a}*{(0.1)}. Now
	\begin{align*}
	    R^{-1}S &= (\Upsilon^{-1}\otimes (\conj{q})^{-\frac{\tilde{N}}{2}}(b^*)^{-1}\otimes 1 )
				(\Upsilon \otimes (\conj{q})^{N}P^*\otimes b^*(\conj{q})^{\frac{\tilde{N}}{2}})\\
			&=1\otimes (\conj{q})^{-\frac{\tilde{N}}{2}}(b^*)^{-1}(\conj{q})^{N}P^*\otimes b^*(\conj{q})^{\frac{\tilde{N}}{2}}\\
			&=1\otimes (\conj{q})^{-\frac{\tilde{N}}{2}}(q^N b^{-1})^*P^*\otimes b^*(\conj{q})^{\frac{\tilde{N}}{2}}\\
			&= 1\otimes (\conj{q})^{-\frac{\tilde{N}}{2}}( q^2 b^{-1}q^N)^*P^*\otimes b^*(\conj{q})^{\frac{\tilde{N}}{2}}\\
			&= 1\otimes (\conj{q})^{\frac{\tilde{N}}{2}}( b^{-1})^*P^*\otimes b^*(\conj{q})^{\frac{\tilde{N}}{2}}\\
			&=\hat{X}_{23}^*.
	\end{align*}
	In the above computation we use the commutation relation \(q^N b^{-1}=q^2 b^{-1}q^N\), because 
	\(q^N b^{-1}e_{i,j}=q^2 q^{i+j+\frac{i-j}{2}}e_{i+1,j+1}=q^2b^{-1}q^Ne_{i,j}\).
	Replacing \(\tilde{b}\) by \(\tilde{b}^*\) in equation \eqref{eq:DuComult-on-tilde-b} and 
	applying~\cite[Theorem 2.2 and Theorem 3.1]{W1992a} we have
	\[
	  \QuantExp(\hat{X})_{23}^*(\Upsilon \otimes b^*(\conj{q})^{\frac{\tilde{N}}{2}}\otimes 1)\QuantExp(\hat{X})_{23}
	  =\Upsilon\otimes b^*(\conj{q})^{\frac{\tilde{N}}{2}} \otimes 1
  	  \dotplus \Upsilon \otimes (\conj{q})^{N}P^*\otimes b^*(\conj{q})^{\frac{\tilde{N}}{2}}.
	\]
	Since \(\Upsilon\) is chosen arbitrarily, we get that
	\[
	  \DuComult[\qE](\tilde{b}^*) = b^*(\conj{q})^{\frac{\tilde{N}}{2}} \otimes 1
							\dotplus (\conj{q})^{N}P^*\otimes b^*(\conj{q})^{\frac{\tilde{N}}{2}}
					 	   =  \dual{j_1}(\tilde{b})^* \dotplus  \dual{j_1}(q^{N})^* \dual{j_2}(\tilde{b})^*.
	\]
	Coassociativity and cancellation conditions for \(\DuComult[\qE]\) follows from 
	Theorem~\ref{the:DuBQgrp-quasitriag}.
	In summary we have,
	
     \begin{theorem}
        \label{the:Du-qE-Comult}
       	There exists \(\DuComult[\qE]\in\Mor^{\T}(\Contvin(\dual{\qE}),\Contvin(\dual{\qE})\boxtimes_{\DuRmat}
	\Contvin(\dual{\qE}))\) 
 	such that
   	\begin{equation*}
     	  \DuComult[\qE](N) = \dual{j_1}(N) \dotplus \dual{j_2}(N) 
     	  \quad \text{ and }\quad
     	  \DuComult[\qE](\tilde{b}) = \dual{j_1}(\tilde{b}) \dotplus  \dual{j_1}(q^{N}) \dual{j_2}(\tilde{b}).
   	\end{equation*}
   	Moreover, \(\DuComult[\qE]\) satisfies the coassociativity condition: 
   	\[
      	   (\Id_{\Contvin(\dual{\qE})}\boxtimes_{\DuRmat}\DuComult[\qE])\circ \DuComult[\qE]
      	    =(\DuComult[\qE]\boxtimes_{\DuRmat}\Id_{\Contvin(\dual{\qE})})\circ\DuComult[\qE];
   	\]
    	and the cancellation condition:
   	\begin{align*}
     	  \dual{j_1}(\Contvin(\dual{\qE}))(1_{\Contvin(\dual{\qE})}\boxtimes_{\DuRmat}\Contvin(\dual{\qE}))
     	   &=\Contvin(\dual{\qE})\boxtimes_{\DuRmat}\Contvin(\dual{\qE})\\
     	  &=(\Contvin(\dual{\qE})\boxtimes_{\DuRmat} 1_{\Contvin(\dual{\qE})})\dual{j_2}(\Contvin(\dual{\qE})).
   	\end{align*}
   \end{theorem}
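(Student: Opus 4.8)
The plan is to take $\DuComult[\qE]$ to be the comultiplication that Theorem~\ref{the:DuBQgrp-quasitriag} assigns to the manageable braided multiplicative unitary $\DuBrMultunit=\QuantExp(\hat{X})^*\DuCorep{Y}$ of Theorem~\ref{the:DuMultunit-main}, and then to compute it on the two generators $N$ and $\tilde{b}$ of $\Contvin(\dual{\qE})$. Since $\Contvin(\dual{\qE})$ coincides, as shown above, with the $\Cst$-algebra $\dual{B}$ spanned by the left slices of $\DuBrMultunit$, Theorem~\ref{the:DuBQgrp-quasitriag} applies verbatim: putting $\DuComult[\qE](\hat{b})\defeq\DuBrMultunit(\hat{b}\otimes 1)\DuBrMultunit^*$, it already gives that $\DuComult[\qE]$ is the unique element of $\Mor^{\T}(\Contvin(\dual{\qE}),\Contvin(\dual{\qE})\boxtimes_{\DuRmat}\Contvin(\dual{\qE}))$ intertwining $\DuBrMultunit$ as in~\eqref{eq:ComulB-R}, that it is coassociative, and that it satisfies the two cancellation identities. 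Since a $\Cst$-morphism extends canonically to normal affiliated elements and $N,\tilde{b}$ are affiliated with $\Contvin(\dual{\qE})$ by Proposition~\ref{prop:Du-qE-Cstalg}, the theorem reduces to computing $\DuComult[\qE](N)$ and $\DuComult[\qE](\tilde{b})$ and recognising the answers through $\dual{j_1}(N)=N\otimes 1$, $\dual{j_2}(N)=1\otimes N$, $\dual{j_1}(\tilde{b})=\tilde{b}\otimes 1$, $\dual{j_1}(q^{N})=q^{N}\otimes 1$ and $\dual{j_2}(\tilde{b})=P\otimes\tilde{b}$.

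First I would treat $N$. Conjugation by $\DuCorep{Y}$, the dual multiplicative unitary of $\T$, carries $N\otimes 1$ to $N\otimes 1\dotplus 1\otimes N$, giving $\DuComult[\qE](N)=\QuantExp(\hat{X})^*(N\otimes 1\dotplus 1\otimes N)\QuantExp(\hat{X})$ as in~\eqref{eq:DuComult-on-N}. One then notes that $\hat{X}=Pb^{-1}q^{\frac{\tilde{N}}{2}}\otimes q^{\frac{\tilde{N}}{2}}b$ commutes with $N\otimes 1+1\otimes N$, which is immediate from $Nb=b(N-2I)$ and $Nb^{-1}=b^{-1}(N+2I)$: the degree shift produced on one leg by $b$ cancels the one produced on the other leg by $b^{-1}$. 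Hence $\QuantExp(\hat{X})$ commutes with $N\otimes 1\dotplus 1\otimes N$ and $\DuComult[\qE](N)=\dual{j_1}(N)\dotplus\dual{j_2}(N)$.

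For $\tilde{b}$ I would work with $\tilde{b}^*$. As $\DuCorep{Y}$ commutes with $q^{\frac{\tilde{N}}{2}}b\otimes 1$, equation~\eqref{eq:DuComult-on-tilde-b} applied to $\tilde{b}^*$ gives $\DuComult[\qE](\tilde{b}^*)=\QuantExp(\hat{X})^*(\tilde{b}^*\otimes 1)\QuantExp(\hat{X})$, and the heart of the matter is to evaluate this conjugation by means of Woronowicz's functional equation for the quantum exponential function~\cite[Theorem 2.2 and Theorem 3.1]{W1992a}. To bring that machinery to bear one introduces, for an arbitrary normal operator $\Upsilon$ with $\Sp(\Upsilon)\subseteq\Specn$, the normal operators $R=\Upsilon\otimes b^*(\conj{q})^{\frac{\tilde{N}}{2}}\otimes 1$ and $S=\Upsilon\otimes(\conj{q})^{N}P^*\otimes b^*(\conj{q})^{\frac{\tilde{N}}{2}}$ on $\Hils[L]\otimes\Hils[L]\otimes\Hils[L]$, checks on the basis $\{e_{i,j}\}$ (using $q^{N}b^{-1}=q^{2}b^{-1}q^{N}$) that $R^{-1}S=\hat{X}_{23}^*$, and verifies that $R$ and $S$ satisfy the $q$-commutation relation and the spectral conditions demanded by~\cite{W1992a}. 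The functional equation then produces $\QuantExp(\hat{X})_{23}^*\,R\,\QuantExp(\hat{X})_{23}=R\dotplus S$; as $\Upsilon$ is arbitrary, comparison of the factors acting on the first leg forces $\DuComult[\qE](\tilde{b}^*)=b^*(\conj{q})^{\frac{\tilde{N}}{2}}\otimes 1\dotplus(\conj{q})^{N}P^*\otimes b^*(\conj{q})^{\frac{\tilde{N}}{2}}=\dual{j_1}(\tilde{b})^*\dotplus\dual{j_1}(q^{N})^*\dual{j_2}(\tilde{b})^*$, and taking adjoints yields $\DuComult[\qE](\tilde{b})=\dual{j_1}(\tilde{b})\dotplus\dual{j_1}(q^{N})\dual{j_2}(\tilde{b})$; here $\dotplus$ is legitimate because the two summands $q$-commute with modulus $\Mod{q}^{2}$ and have spectrum in $\Specn$, so their sum closes up to a normal operator with spectrum in $\Specn$.

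Finally, the coassociativity of $\DuComult[\qE]$ and the two cancellation conditions are exactly the assertions furnished by Theorem~\ref{the:DuBQgrp-quasitriag} with $\G=\T$, so nothing further is required. I expect the main obstacle to be the step invoking the quantum exponential function: one must verify that the concrete operators $R$ and $S$ genuinely satisfy the hypotheses of~\cite[Theorem 2.2]{W1992a} --- that they strongly commute in the $q$-deformed sense and have spectra in $\Specn$, so that $R\dotplus S$ is again normal with spectrum in $\Specn$ --- and one must keep careful track of the leg numbering through the flip entering the dual braiding $\Dualbraiding{}{}$, so that the identity $R^{-1}S=\hat{X}_{23}^*$ really computes the conjugate of $\tilde{b}^*\otimes 1$ and not of some twisted variant. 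Once these affiliation and strong-commutation points are settled, the remaining verifications are the basis-vector computations already carried out in the discussion above.
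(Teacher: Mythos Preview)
Your proposal is correct and follows essentially the same route as the paper: define $\DuComult[\qE]$ by conjugation with $\DuBrMultunit$, reduce to the generators, handle $N$ via the commutation of $\hat X$ with $N\otimes1+1\otimes N$, and handle $\tilde b^*$ by introducing the auxiliary $\Upsilon$, setting up the same $R$ and $S$, computing $R^{-1}S=\hat X_{23}^*$, and invoking \cite[Theorems~2.2 and~3.1]{W1992a}; coassociativity and cancellation are then read off from Theorem~\ref{the:DuBQgrp-quasitriag}. Your added caveats about verifying the hypotheses of \cite{W1992a} and tracking leg numbering are well taken but do not alter the argument.
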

   	Finally combining Proposition~\ref{prop:Du-qE-Cstalg} and Theorem~\ref{the:Du-qE-Comult} 
	we obtain the dual of braided quantum \(\textup{E}(2)\) group.
	
\begin{theorem}
   	The triple \((\Contvin(\dual{\qE}),\DuComult[\qE],\dual{\Gamma})\) is a braided \(\Cst\)\nb-quantum group 
	over~\(\T\) with respect to \((\corep{U},\DuRmat)\) and the dual of~\(\dual{\qE}\) is isomorphic to the braided 
	\(\qE\) over \(\T\) with respect to~\((\corep{U},\Rmat)\).
\end{theorem}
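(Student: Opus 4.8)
The plan is to assemble the results already proved in this section and then to deduce the self\nb-duality from the fact that dualising a braided multiplicative unitary twice recovers it. For the braided \(\Cst\)\nb-quantum group structure, I would combine Theorem~\ref{the:DuMultunit-main}, which says that \(\DuBrMultunit=\QuantExp(\hat X)^{*}\DuCorep{Y}\) is a manageable braided multiplicative unitary on \(\Hils[L]\otimes\Hils[L]\) over \(\T\) relative to \((\corep{U},\DuRmat)\) with \(\corep{U}=\Corep{W}_{23}\), with Theorem~\ref{the:DuBQgrp-quasitriag} applied to \(\DuBrMultunit\). The latter produces a braided \(\Cst\)\nb-quantum group over \(\T\) whose underlying \(\Cst\)\nb-algebra is the closed linear span \(\dual{B}\) of the left leg of \(\DuBrMultunit\), whose comultiplication is the \(\boxtimes_{\DuRmat}\)\nb-twisted coproduct characterised by~\eqref{eq:ComulB-R}, and whose action is \(\hat b\mapsto\corep{U}(\hat b\otimes 1)\corep{U}^{*}\). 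Then I would invoke the identity \(\dual{B}=\Contvin(\dual{\qE})\) established in Section~\ref{sec:Du-Eq2-cstalg}, together with Proposition~\ref{prop:Du-qE-Cstalg}, to identify this \(\Cst\)\nb-algebra with the one at hand; Theorem~\ref{the:Du-qE-Comult} to identify its comultiplication with \(\DuComult[\qE]\); and the computations \(\corep{U}(N\otimes 1)\corep{U}^{*}=N\otimes 1\), \(\corep{U}(\tilde b\otimes 1)\corep{U}^{*}=\tilde b\otimes z^{*}\) of Section~\ref{sec:Du-Eq2-comult} to identify its action with \(\dual{\Gamma}\). This shows that \((\Contvin(\dual{\qE}),\DuComult[\qE],\dual{\Gamma})\) is exactly the braided \(\Cst\)\nb-quantum group over \(\T\) generated by \(\DuBrMultunit\) relative to \((\corep{U},\DuRmat)\).

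For the self\nb-duality, recall that, by the construction of Theorem~\ref{the:DuBQgrp-quasitriag}, the dual of the braided \(\Cst\)\nb-quantum group \(\dual{\qE}\) is the braided \(\Cst\)\nb-quantum group generated by the dual braided multiplicative unitary of \(\DuBrMultunit\). Since \(\DuRmat=\flip(\Rmat^{*})\) is again an \(\Rmattxt\)\nb-matrix on \(\Z\) with \(\widehat{\widehat{\Rmat}}=\Rmat\) (cf.~\cite[Lemma 2.2]{MRW2016}), I would apply Theorem~\ref{the:manag-DuBrMultunit} with the roles of \(\Rmat\) and \(\DuRmat\) — equivalently of \(\Braiding{}{}\) and \(\Dualbraiding{}{}\) — interchanged, obtaining that this dual equals \(\Braiding{}{}\DuBrMultunit^{*}\Braiding{}{}^{*}\), a manageable braided multiplicative unitary over \(\T\) relative to \((\corep{U},\Rmat)\). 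The key point is then the identity \(\Braiding{}{}\,\Dualbraiding{}{}=\Id_{\Hils[L]\otimes\Hils[L]}\): since \(\widehat{Z}\,e_{ij}\otimes e_{kl}=\zeta^{jl}\,e_{ij}\otimes e_{kl}\) (verified in the proof of Theorem~\ref{the:DuMultunit-main}) and, by the analogous computation for \(\Rmat\), \(Z\,e_{ij}\otimes e_{kl}=\zeta^{-jl}\,e_{ij}\otimes e_{kl}\), the operators \(Z\) and \(\widehat{Z}\) are diagonal, \(\Flip\)\nb-invariant and mutually inverse, so \(\Braiding{}{}\,\Dualbraiding{}{}=(Z\Flip)(\widehat{Z}\Flip)=Z\widehat{Z}=\Id\), that is \(\Braiding{}{}=\Dualbraiding{}{}^{*}\). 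Using \(\DuBrMultunit=\Dualbraiding{}{}\BrMultunit^{*}\Dualbraiding{}{}^{*}\) from Theorem~\ref{the:manag-DuBrMultunit}, this gives
\[
  \Braiding{}{}\DuBrMultunit^{*}\Braiding{}{}^{*}
  =\Dualbraiding{}{}^{*}\bigl(\Dualbraiding{}{}\BrMultunit^{*}\Dualbraiding{}{}^{*}\bigr)^{*}\Dualbraiding{}{}
  =\Dualbraiding{}{}^{*}\Dualbraiding{}{}\,\BrMultunit\,\Dualbraiding{}{}^{*}\Dualbraiding{}{}
  =\BrMultunit ,
\]
so the dual of \(\dual{\qE}\) is the braided \(\Cst\)\nb-quantum group over \(\T\) generated by \(\BrMultunit\) relative to \((\corep{U},\Rmat)\). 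By~\cite[Theorem 4.6]{RR2021} this is precisely the braided quantum \(\textup{E}(2)\) group, and since the construction of Theorem~\ref{the:DuBQgrp-quasitriag} depends only on the generating braided multiplicative unitary and on the pair \((\corep{U},\Rmat)\), the two triples coincide; in particular they are isomorphic as braided \(\Cst\)\nb-quantum groups over \(\T\).

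The step I expect to require the most care is the bookkeeping of the two passages to the dual: one must verify \(\Braiding{}{}\,\Dualbraiding{}{}=\Id\) — so that conjugating twice by the braiding operator is trivial — and make sure that the doubled dual returns the monoidal datum \((\corep{U},\Rmat)\) itself rather than some twisted variant, which relies on \(\widehat{\widehat{\Rmat}}=\Rmat\) and on \(\corep{U}\) being the same fixed representation of \(\T\) throughout. Everything else amounts to an assembly of results already established in this paper.
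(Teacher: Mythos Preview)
Your proposal is correct and, for the first assertion, matches the paper exactly: the paper states the theorem as a direct consequence of Proposition~\ref{prop:Du-qE-Cstalg} and Theorem~\ref{the:Du-qE-Comult} (together with Theorem~\ref{the:DuMultunit-main} and Theorem~\ref{the:DuBQgrp-quasitriag}), without writing out a separate proof. For the biduality assertion the paper gives no argument at all, so your explicit verification that \(\Braiding{}{}=\Dualbraiding{}{}^{*}\) via the diagonal formulas for \(Z\) and \(\widehat{Z}\), and hence \(\Braiding{}{}\,\DuBrMultunit^{*}\,\Braiding{}{}^{*}=\BrMultunit\), is a welcome addition rather than a departure; it supplies the detail the paper leaves implicit and is the natural way to justify the claim in this concrete setting.
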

 
\section{Bosonization}
  \label{sec:Bosonization} 
%  	\Sutanucomment{A systematic way to put an ordinary quantum group and a braided quantum group to form a new ordinary quantum group, called the bosonization construction, introduced in [MRW16], is based on the algebraic Radford bosonization [Rad85] (which is again based on the semidirect product construction for groups) provides an equivalence between braided compact quantum groups and a class of ordinary quantum groups with an idempotent quantum group morphism. In this section, we obtain an explicit description of the bosonization of the braided quantum group constructed above.
%}

The concept of combining an ordinary quantum group and a braided quantum group in a systematic manner to create a new ordinary quantum group, known as the bosonization construction, was introduced in~\cite{MRW2016}. This construction is rooted in the algebraic Radford bosonization~\cite{Rad1985}, which, in turn, relies on the semidirect product construction for groups. It establishes an equivalence between braided compact quantum groups and a specific category of ordinary quantum groups with an idempotent quantum group morphism. In this section, we present a detailed exposition of the bosonization process for the dual braided quantum group constructed in the previous section.

	We construct an ordinary quantum group \(\dual{\qE}\rtimes_{\dual{\Gamma}}\T\) with an idempotent quantum group homomorphism with image \(\T\). Therefore, \(\dual{\qE}\rtimes_{\dual{\Gamma}}\T\) is the analytic counterpart of the bosonization of \(\dual{\qE}\), which is a new example of (non-compact) locally compact quantum group.

	The \(\Rmattxt\)\nb-matrix \(\Rmat\) 
	induces a representation~\(\corep{V}'\in\U(\Comp(\Hils[L])\otimes\Contvin(\Z))\) satisfying 
	equation~\((32)\)~\cite[Theorem 5.3]{MRW2012}. Denoting \(\corep{V}'\) as \(\Corep{V}'\) 
	when viewed as an element of 
	\(\U(\Hils[L]\otimes \Hils)\) define \(\DuCorep{V}'=\Flip\Corep{V}'^{*}\Flip\).
	Recall the action of \(\Corep{W}\) on \(\Hils\otimes \Hils\), on the basis elements, is given by
	\(\Corep{W} e_{p}\otimes e_{k} = e_{p}\otimes e_{k+p}\) and  
	\(\DuCorep{V}'\) acts on \(\Hils\otimes\Hils\otimes\Hils\cong\Hils\otimes\Hils[L]\) by
	\(\DuCorep{V}' e_p\otimes e_{i,j}=\zeta^{-pj}e_p\otimes e_{i,j}\). 
	Then according to~\cite[Theorem 3.8]{MRW2017}
	\begin{equation}
	  \label{def:Multunit-boson}
		\widetilde{\mathcal{W}}=\Corep{W}_{13}\Corep{U}_{23}\DuCorep{V}_{34}'^*\DuBrMultunit_{24}\DuCorep{V}'_{34} 
		\quad \textup{ in } \U(\Hils[H]\otimes\Hils[L]\otimes\Hils[H]\otimes\Hils[L])
	\end{equation}
	is a manageable multiplicative unitary. It is known that every manageable 
	multiplicative unitary corresponds to a
	\(\Cst\)\nb-quantum group~\cite{W1996}.
	Let \(\widetilde{\mathcal{W}}\) generate the \(\Cst\)\nb-quantum group \(\dual{\qE}\rtimes_{\dual{\Gamma}}\T\). 
	It is a semidirect product quantum group or a quantum group with projection in the sense of~\cite{MRW2017}, 
	hence explains our notation.
	Observe that \(\DuCorep{Y}=\DuCorep{W}_{13}\DuCorep{W}_{14}\) on \(\Hils^{\otimes 4}\)
	and \(\Corep{U}=\Corep{W}_{23}\). Hence, equation \eqref{def:Multunit-boson} can be written as
	\begin{equation}
	  \label{eq:Aux-multunit-boson}
		\widetilde{\mathcal{W}}=\Corep{W}_{14}\Corep{W}_{34}\DuCorep{V}_{456}'^*
                     \QuantExp(\hat{X})_{2356}^*
                     \DuCorep{W}_{25}\DuCorep{W}_{26}
                     \DuCorep{V}_{456}' \quad \textup{ in } \U(\Hils^{\otimes 6}).
	\end{equation}
	The \(\Cst\)\nb-algebra~\(\Contvin(\dual{\qE}\rtimes_{\dual{\Gamma}}\T)
		\defeq 
		\Cont(\T)\boxtimes_{\DuRmat}\Contvin(\dual{\qE})=
		j_{\T}(\Cont(\T))j_{\dual{\qE}}(\Contvin(\dual{\qE}))\) is a subalgebra of \(\Bound(\Hils\otimes\Hils[L])\), where 
		\(j_{\T}(z)=z\otimes 1\) and 
  		\(j_{\dual{\qE}}(\hat{b})=\DuCorep{V}'^{*}(1\otimes \hat{b})\DuCorep{V}'\), 
		\(z\) is the unitary generator of~\(\Cont(\T)\) and~\(\hat{b}\in \Contvin(\dual{\qE})\subset\Bound(\Hils[L])\). 
	Then a short calculation using the definitions of~\(\tilde{b}, N, z\) yields
	\begin{align*}
	   j_{\dual{\qE}}(N) &= 1\otimes N,
	   &\quad
	   j_{\T}(z)j_{\dual{\qE}}(N) &= j_{\dual{\qE}}(N)j_{\T}(z),\\
	   j_{\dual{\qE}}(\tilde{b}) &= P'\otimes \tilde{b},
	   &\quad
	   j_{\T}(z)j_{\dual{\qE}}(\tilde{b}) &= \zeta j_{\dual{\qE}}(\tilde{b})j_{\T}(z).
	\end{align*}
%	where \(P'\in\U(\Hils)\) is defined by \(P' e_p=\zeta^{-p}e_p\). 
	The comultiplicaton \(\Comult[{\dual{\qE}\rtimes_{\dual{\Gamma}}\T}]
	  \colon\Contvin(\dual{\qE}\rtimes_{\dual{\Gamma}}\T)\to
	  \Mult(\Contvin(\dual{\qE}\rtimes_{\dual{\Gamma}}\T)\otimes \Contvin(\dual{\qE}\rtimes_{\dual{\Gamma}}\T))\) 
	  is given by
	\[
	  \Comult[{\dual{\qE}\rtimes_{\dual{\Gamma}}\T}](d)=
	    \widetilde{\mathcal{W}}(d\otimes 1)\widetilde{\mathcal{W}}^* 
	     \quad\text{ for all } d\in \Contvin(\dual{\qE}\rtimes_{\dual{\Gamma}}\T).
	\]

	Notice that \(\DuCorep{W}_{25}\DuCorep{W}_{26}\) acts trivially on the fifth and sixth tensor factor. 
	Hence, it commutes with \(\DuCorep{V}_{456}'\). Also, the commutation relation 
	\(\DuCorep{V}'^*(1 \otimes q^{\frac{\tilde{N}}{2}}b)\DuCorep{V}'=P'\otimes q^{\frac{\tilde{N}}{2}}b\)
 allows us to rewrite equation \eqref{eq:Aux-multunit-boson} as 
\begin{equation*}
\widetilde{\mathcal{W}}=\Corep{W}_{14}\Corep{W}_{34}
                     \QuantExp(Pb^{-1}q^{\frac{\tilde{N}}{2}} \otimes P'\otimes q^{\frac{\tilde{N}}{2}}b)_{23456}^*
                     \DuCorep{W}_{25}\DuCorep{W}_{26}
                     \quad \textup{ in } \U(\Hils^{\otimes 6}).
\end{equation*}
Moreover,
 \(\Corep{W}_{14}(z\otimes 1_{\Hils^{\otimes 2}}\otimes 1_{\Hils^{\otimes 3}})\Corep{W}_{14}^*
= z\otimes 1_{\Hils^{\otimes 2}}\otimes z\otimes 1_{\Hils^{\otimes 2}}=j_{\T}(z)\otimes j_{\T}(z)\).
Then clearly, for \(d=j_{\T}(z)=z\otimes 1\), we obtain
\[
\Comult[\dual{\qE}\rtimes_{\dual{\Gamma}}\T](j_{\T}(z)) = j_{\T}(z)\otimes j_{\T}(z).
\]
Now for \(d=j_{\dual{\qE}}(N)\) we first see that
\begin{align*}
&\DuCorep{W}_{25}\DuCorep{W}_{26}(1\otimes N\otimes 1_{\Hils^{\otimes 3}})
    \DuCorep{W}_{26}^*\DuCorep{W}_{25}^* e_p\otimes e_{ij}\otimes e_q\otimes e_{mn}\\
&=\DuCorep{W}_{25}\DuCorep{W}_{26}(1\otimes N\otimes 1_{\Hils^{\otimes 3}})
      e_p\otimes e_{i+m+n,j}\otimes e_q\otimes e_{mn}\\
&=\DuCorep{W}_{25}\DuCorep{W}_{26}
      e_p\otimes (i+j+m+n)e_{i+m+n,j}\otimes e_q\otimes e_{mn}\\
&=e_p\otimes (i+j)e_{i,j}\otimes e_q\otimes e_{mn} + e_p\otimes e_{i,j}\otimes e_q\otimes (m+n)e_{mn}\\
&= (1\otimes N\otimes 1_{\Hils^{\otimes 3}} + 1_{\Hils^{\otimes 3}}\otimes 1\otimes N)
      e_p\otimes e_{ij}\otimes e_q\otimes e_{mn}.
\end{align*}
We express \(\Comult[{\dual{\qE}\rtimes_{\dual{\Gamma}}\T}]\) as \(\Comult[{\dual{\qE}\rtimes_{\dual{\Gamma}}\T}](j_{\dual{\qE}}(N))=
 \Comult[{\dual{\qE}\rtimes_{\dual{\Gamma}}\T}]^1 +  \Comult[{\dual{\qE}\rtimes_{\dual{\Gamma}}\T}]^2\) where 
\begin{multline}
\Comult[{\dual{\qE}\rtimes_{\dual{\Gamma}}\T}]^1 =\Corep{W}_{14}\Corep{W}_{34}
                     	\QuantExp(Pb^{-1}q^{\frac{\tilde{N}}{2}} \otimes P'\otimes q^{\frac{\tilde{N}}{2}}b)_{23456}^*
			(1\otimes N\otimes 1_{\Hils^{\otimes 3}})\\
			\QuantExp(Pb^{-1}q^{\frac{\tilde{N}}{2}} \otimes P'\otimes q^{\frac{\tilde{N}}{2}}b)_{23456}
			\Corep{W}_{34}^*\Corep{W}_{14}^*, 
\end{multline}

\begin{multline}
\Comult[{\dual{\qE}\rtimes_{\dual{\Gamma}}\T}]^2 =\Corep{W}_{14}\Corep{W}_{34}
                     	\QuantExp(Pb^{-1}q^{\frac{\tilde{N}}{2}} \otimes P'\otimes q^{\frac{\tilde{N}}{2}}b)_{23456}^*
			(1_{\Hils^{\otimes 3}}\otimes 1\otimes N)\\
			\QuantExp(Pb^{-1}q^{\frac{\tilde{N}}{2}} \otimes P'\otimes q^{\frac{\tilde{N}}{2}}b)_{23456}
			\Corep{W}_{34}^*\Corep{W}_{14}^*.
\end{multline}
Observe that \(Nb^{-1}=b^{-1}(N+2I)\) and \(Nb=b(N-2I)\). Therefore the above equations reduce to
\begin{align*}
\Comult[{\dual{\qE}\rtimes_{\dual{\Gamma}}\T}]^1 
	&= \Corep{W}_{14}\Corep{W}_{34}(1\otimes (N+2)\otimes 1_{\Hils^{\otimes 3}})\Corep{W}_{34}^*\Corep{W}_{14}^*
	  =1\otimes N\otimes 1_{\Hils^{\otimes 3}} + 2\cdot 1_{\Hils^{\otimes 6}},\\
\Comult[{\dual{\qE}\rtimes_{\dual{\Gamma}}\T}]^2 
	&= \Corep{W}_{14}\Corep{W}_{34}(1_{\Hils^{\otimes 3}}\otimes 1\otimes (N-2))\Corep{W}_{34}^*\Corep{W}_{14}^*
	  = 1_{\Hils^{\otimes 3}}\otimes 1\otimes N - 2\cdot 1_{\Hils^{\otimes 6}}.
\end{align*}
Summing up we have,
\begin{equation*}
\Comult[{\dual{\qE}\rtimes_{\dual{\Gamma}}\T}](j_{\dual{\qE}}(N)) =j_{\dual{\qE}}(N)\otimes 1 \dotplus 1\otimes j_{\dual{\qE}}(N).
\end{equation*}

Now we compute \(\Comult[{\dual{\qE}\rtimes_{\dual{\Gamma}}\T}](d)\) for \(d=j_{\dual{\qE}}(\tilde{b}^*)\).  We choose to calculate the action of the comultiplication map on \(\tilde{b}^*\) instead of \(\tilde{b}\) to avoid some computational hurdles.

By definition 
\(\Comult[{\dual{\qE}\rtimes_{\dual{\Gamma}}\T}](j_{\dual{\qE}}(\tilde{b})^*)=\widetilde{\mathcal{W}}(P'\otimes q^{\frac{\tilde{N}}{2}}b\otimes 1_{\Hils^{\otimes 3}})^*\widetilde{\mathcal{W}}^*\). By analogous computation as above we see that 
\(\DuCorep{W}_{26}\DuCorep{W}_{25}\) commutes with \((P'\otimes q^{\frac{\tilde{N}}{2}}b\otimes 1_{\Hils^{\otimes 3}})\).
Therefore we have 
\begin{multline}
\label{eq:boson-comult-b}
\Comult[{\dual{\qE}\rtimes_{\dual{\Gamma}}\T}](j_{\dual{\qE}}(\tilde{b}^*)) 
     = \Corep{W}_{14}\Corep{W}_{34}
          \QuantExp(Pb^{-1}q^{\frac{\tilde{N}}{2}} \otimes P'\otimes q^{\frac{\tilde{N}}{2}}b)_{23456}^*
	(P'\otimes q^{\frac{\tilde{N}}{2}}b\otimes 1_{\Hils^{\otimes 3}})^* \\
	\QuantExp(Pb^{-1}q^{\frac{\tilde{N}}{2}} \otimes P'\otimes q^{\frac{\tilde{N}}{2}}b)_{23456}
	\Corep{W}_{34}^*\Corep{W}_{14}^*.
\end{multline}
Now consider \(R=P'^{*}\otimes b^*(\conj{q})^{\frac{\tilde{N}}{2}}\otimes 1_{\Hils^{\otimes 3}}\) 
and \(S=P'^{*}\otimes (\conj{q})^{N}P^*\otimes P'^{*}\otimes b^*(\conj{q})^{\frac{\tilde{N}}{2}}\). Then 
\[
R^{-1}S=1\otimes (\conj{q})^{\frac{\tilde{N}}{2}}(b^*)^{-1}P^*\otimes P'^{*}\otimes  b^*(\conj{q})^{\frac{\tilde{N}}{2}}.
\]
Then again invoking~\cite[Theorem 2.2 and Theorem 3.1]{W1992a} equation \eqref{eq:boson-comult-b} becomes
\begin{align}
\label{eq:comult-bos-intermediate}
   &\Comult[{\dual{\qE}\rtimes_{\dual{\Gamma}}\T}](j_{\dual{\qE}}(\tilde{b}^*)) \nonumber\\
   &\quad=\Corep{W}_{14}\Corep{W}_{34}
(P'^{*}\otimes b^*(\conj{q})^{\frac{N}{2}}\otimes 1_{\Hils^{\otimes 3}}\dotplus P'^{*}\otimes (\conj{q})^{N}P^*\otimes P'^{*}\otimes b^*(\conj{q})^{\frac{N}{2}})
\Corep{W}_{34}^*\Corep{W}_{14}^*.
\end{align}
We compute each term on the right hand side of the above equation separately. Note that \(\Corep{W}_{14}\Corep{W}_{34}\) effects a change only in the fourth leg, hence it commutes with 
\(P'^{*}\otimes b^*(\conj{q})^{\frac{N}{2}}\otimes 1_{\Hils^{\otimes 3}}\). Therefore the first term remains
\(P'^{*}\otimes b^*(\conj{q})^{\frac{N}{2}}\otimes 1_{\Hils^{\otimes 3}}\). As for the second term, we have
\begin{align*}
& \Corep{W}_{14}\Corep{W}_{34} (P'^{*}\otimes (\conj{q})^{N}P^*\otimes P'^{*})\Corep{W}_{34}^*\Corep{W}_{14}^* e_p\otimes e_{ij}\otimes e_k \\
&=\Corep{W}_{14}\Corep{W}_{34} (P'^{*}\otimes (\conj{q})^{N}P^*\otimes P'^{*}) e_p\otimes e_{ij}\otimes e_{k-p-j} \\
&=\Corep{W}_{14}\Corep{W}_{34} \zeta^{p}e_p\otimes \zeta^{j}\conj{q}^{i+j}e_{ij}\otimes \zeta^{k-p-j}e_{k-p-j} \\
&= e_p\otimes \conj{q}^{i+j}e_{ij}\otimes \zeta^{k}e_{k}\\
&=(1\otimes (\conj{q})^{N}\otimes P'^{*})e_p\otimes e_{ij}\otimes e_k .
\end{align*}
Therefore the second term in the right hand side of \eqref{eq:comult-bos-intermediate} becomes \(1\otimes (\conj{q})^{N}\otimes P'^{*}\otimes b^*(\conj{q})^{\frac{N}{2}}\). 
Thus we finally obtain
\[
\Comult[{\dual{\qE}\rtimes_{\dual{\Gamma}}\T}](j_{\dual{\qE}}(\tilde{b}^*))
	=j_{\dual{\qE}}(\tilde{b}^*) \otimes 1
	\dotplus
	j_{\dual{\qE}}((\conj{q})^{N})\otimes j_{\dual{\qE}}(\tilde{b}^*).	
\]

We sum up the above calculations in the following theorem yielding the bosonization of the dual braided quantum \(\textup{E}(2)\) group.
\begin{theorem}
\(\Contvin(\dual{\qE}\rtimes_{\dual{\Gamma}}\T)\) is the universal \(\Cst\)\nb-algebra generated by a unitary \(u\), a self-adjoint operator \(N'\) with integer spectrum and an operator \(b'\) with \(\Sp(b')=\Mod{q}^\Z\) subject to the commutation relations
	  \[
	   uN'=N'u,\quad ub'=\zeta b'u,\quad N'b'=b'(N'-2I).
	  \]
	  There exists a unique 
	  \(\Comult[{\dual{\qE}\rtimes_{\dual{\Gamma}}\T}]\colon\Contvin(\dual{\qE}\rtimes_{\dual{\Gamma}}\T)\to \Mult(\Contvin(\dual{\qE}\rtimes_{\dual{\Gamma}}\T)\otimes \Contvin(\dual{\qE}\rtimes_{\dual{\Gamma}}\T))\) such that
	  \begin{align*}
	  \Comult[{\dual{\qE}\rtimes_{\dual{\Gamma}}\T}](u)&=u\otimes u,\\
	   \Comult[{\dual{\qE}\rtimes_{\dual{\Gamma}}\T}](N')&=N'\otimes 1 \dotplus 1\otimes N',\\
	  \Comult[{\dual{\qE}\rtimes_{\dual{\Gamma}}\T}](b')&=b'\otimes 1 \dotplus q^{N'}\otimes b',
	  \end{align*}
	  and~\((\Contvin(\dual{\qE}\rtimes_{\dual{\Gamma}}\T),\Comult[{\dual{\qE}\rtimes_{\dual{\Gamma}}\T}])\) is a \(\Cst\)\nb-quantum group. Moreover, there exists an
  	idempotent Hopf \Star{}homomorphism \(g\colon\Contvin(\dual{\qE}\rtimes_{\dual{\Gamma}}\T)\to\Mult(\Contvin(\dual{\qE}\rtimes_{\dual{\Gamma}}\T))\) with~\(g(u)=u\), 
 	\(g(N')=0\) and~\(g(b')=0\). Its image is the copy of \(\Cont(\T)\) generated 
 	by~\(u\) as a closed quantum subgroup of~\(\dual{\qE}\rtimes_{\dual{\Gamma}}\T\) and its kernel is the copy 
 	of~\(\Contvin(\dual{\qE})\) generated by~\(N',b'\) as the braided~\(\dual{\qE}\) group over~\(\T\).
\end{theorem}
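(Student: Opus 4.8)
The plan is to obtain all four assertions — the universal presentation, the existence and uniqueness of $\Comult[{\dual{\qE}\rtimes_{\dual{\Gamma}}\T}]$, the quantum group axioms, and the properties of $g$ — by assembling the explicit data gathered in this section and invoking the general theory of manageable multiplicative unitaries and of quantum groups with projection. Set $u\defeq j_{\T}(z)$, $N'\defeq j_{\dual{\qE}}(N)$ and $b'\defeq j_{\dual{\qE}}(\tilde{b})$ inside $\Contvin(\dual{\qE}\rtimes_{\dual{\Gamma}}\T)=\Cont(\T)\boxtimes_{\DuRmat}\Contvin(\dual{\qE})\subseteq\Bound(\Hils\otimes\Hils[L])$. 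The computations preceding the statement give $j_{\dual{\qE}}(N)=1\otimes N$ and $j_{\dual{\qE}}(\tilde{b})=P'\otimes\tilde{b}$, from which the spectral and normality properties of $N'$ and $b'$ are read off, and they yield the relations $uN'=N'u$, $ub'=\zeta b'u$, and $N'b'=b'(N'-2I)$ (the last one because $j_{\dual{\qE}}$ is a morphism and $N\tilde{b}=\tilde{b}(N-2I)$ inside $\Contvin(\dual{\qE})$). So by the universal property of the $\Cst$\nb-algebra defined by these generators and relations there is a canonical surjective $\Star{}homomorphism onto $\Contvin(\dual{\qE}\rtimes_{\dual{\Gamma}}\T)$.

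For injectivity of that map — the part that needs a genuine argument — I would use the universal presentations of the two factors, namely that $\Contvin(\dual{\qE})$ is the universal $\Cst$\nb-algebra on $N$ and $\tilde{b}$ (Proposition~\ref{prop:Du-qE-Cstalg} together with the defining relations of $\qE$) and that $\Cont(\T)$ is the universal $\Cst$\nb-algebra on a single unitary, and then invoke the description of the twisted tensor product: $\Cont(\T)\boxtimes_{\DuRmat}\Contvin(\dual{\qE})$ is generated by the embedded copies $\dual{j_{1}}(\Cont(\T))$ and $\dual{j_{2}}(\Contvin(\dual{\qE}))$ and carries exactly the relations of the two factors together with the cross relations governed by the dual braiding $\dual{Z}$. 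Since those cross relations have been made explicit — $j_{\T}(z)$ commutes with $j_{\dual{\qE}}(N)$ and $\zeta$\nb-commutes with $j_{\dual{\qE}}(\tilde{b})$, equivalently $P'\otimes\tilde{b}$ — no further relations are hidden, and the abstract algebra maps back onto the concrete one; the two maps are mutually inverse on generators, hence inverse. Affiliation of $N'$ and $b'$ then follows from that of $N$ and $\tilde{b}$ with $\Contvin(\dual{\qE})$ since the canonical embedding is a morphism.

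The comultiplication and the quantum group axioms require no further computation. The operator $\widetilde{\mathcal{W}}$ of~\eqref{def:Multunit-boson} is a manageable multiplicative unitary by~\cite[Theorem 3.8]{MRW2017}; hence it generates a $\Cst$\nb-quantum group with $\Cst$\nb-algebra $\Contvin(\dual{\qE}\rtimes_{\dual{\Gamma}}\T)$ and comultiplication $\Comult[{\dual{\qE}\rtimes_{\dual{\Gamma}}\T}](d)=\widetilde{\mathcal{W}}(d\otimes 1)\widetilde{\mathcal{W}}^{*}$, which is automatically coassociative and satisfies the cancellation conditions. Its values on $u$, $N'$ and $b'$ are the ones already derived above — $\Comult[{\dual{\qE}\rtimes_{\dual{\Gamma}}\T}](u)=u\otimes u$, $\Comult[{\dual{\qE}\rtimes_{\dual{\Gamma}}\T}](N')=N'\otimes 1\dotplus 1\otimes N'$, and, by taking the adjoint of the formula obtained for $\tilde{b}^{*}$, $\Comult[{\dual{\qE}\rtimes_{\dual{\Gamma}}\T}](b')=b'\otimes 1\dotplus q^{N'}\otimes b'$ — and since $u$, $N'$, $b'$ generate, a morphism with these values is unique.

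Finally, for the map $g$: the bosonization produced from a braided $\Cst$\nb-quantum group over a quasitriangular quantum group is a quantum group with projection, so $\dual{\qE}\rtimes_{\dual{\Gamma}}\T$ comes equipped with canonical morphisms $\T\to\dual{\qE}\rtimes_{\dual{\Gamma}}\T$ and $\dual{\qE}\rtimes_{\dual{\Gamma}}\T\to\T$ whose composite is the identity on $\T$; dualising the resulting idempotent endomorphism gives an idempotent Hopf $\Star{}homomorphism $g\colon\Contvin(\dual{\qE}\rtimes_{\dual{\Gamma}}\T)\to\Mult(\Contvin(\dual{\qE}\rtimes_{\dual{\Gamma}}\T))$ with image $j_{\T}(\Cont(\T))$, see~\cites{MRW2017,Roy2022}. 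On generators $g$ then sends $u\mapsto u$, $N'\mapsto 0$ and $b'\mapsto 0$; idempotency and the Hopf identity $\Comult[{\dual{\qE}\rtimes_{\dual{\Gamma}}\T}]\circ g=(g\otimes g)\circ\Comult[{\dual{\qE}\rtimes_{\dual{\Gamma}}\T}]$ are checked on $u,N',b'$ from the comultiplication formulae above (for instance $(g\otimes g)\Comult[{\dual{\qE}\rtimes_{\dual{\Gamma}}\T}](b')=g(b')\otimes 1\dotplus q^{g(N')}\otimes g(b')=0=\Comult[{\dual{\qE}\rtimes_{\dual{\Gamma}}\T}](g(b'))$). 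That the image is a closed quantum subgroup is the general property of idempotent Hopf morphisms, and the identification of the kernel of $g$ with the braided $\dual{\qE}$ over $\T$ generated by $N'$ and $b'$ is exactly the reconstruction of a braided quantum group from its bosonization. The step I expect to demand real care is the one in the second paragraph — matching the concretely realised twisted tensor product with the abstract universal algebra, in particular bookkeeping the unbounded generators and confirming that the braiding contributes no relations beyond the ones computed.
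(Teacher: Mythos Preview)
Your proposal is correct and follows essentially the same route as the paper: the paper presents the theorem as a summary of the computations preceding it (the explicit formulas for $j_{\dual{\qE}}(N)$, $j_{\dual{\qE}}(\tilde{b})$, the cross relations with $j_{\T}(z)$, and the values of $\Comult[{\dual{\qE}\rtimes_{\dual{\Gamma}}\T}]$ on generators), together with the general machinery of~\cite{MRW2017} for the quantum-group-with-projection structure. Your write-up in fact supplies more explicit justification than the paper does for the universal presentation and for the properties of~$g$, which the paper simply asserts; your caution about the second paragraph is well placed, since the paper does not spell out that argument either.
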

 
\medskip

%\subsection*{Acknowledgements}
%   The author is grateful to Prof. Sutanu Roy for supporting him through a Senior Project Associate fellowship under the DST-INSPIRE faculty award program, Government of India Grant No. DST/INSPIRE/04/2016/000215.
%\medskip

\medskip

\begin{bibdiv}
  \begin{biblist}

  \bib{BS1993}{article}{
  author={Baaj, Saad},
  author={Skandalis, Georges},
  title={Unitaires multiplicatifs et dualit\'{e} pour les produits crois\'{e}s de {$C^*$}-alg{\`e}bres},
  date={1993},
  issn={0012-9593},
  journal={Ann. Sci. \'{E}cole Norm. Sup. (4)},
  volume={26},
  number={4},
  pages={425\ndash 488},
}
  
 \bib{MRW2012}{article}{
    author = {Meyer, Ralf},
    author= {Roy, Sutanu},
    author={Woronowicz, Stanis\l aw Lech},
     title = {Homomorphisms of quantum groups},
   journal = {M\"{u}nster J. Math.},
  FJOURNAL = {M\"{u}nster Journal of Mathematics},
    volume = {5},
      year = {2012},
     pages = {1--24},
      ISSN = {1867-5778},
%  MRNUMBER = {3047623},
}
  
  \bib{MRW2014}{article}{
  author={Meyer, Ralf},
  author={Roy, Sutanu},
  author={Woronowicz, Stanis\l aw Lech},
  title={Quantum group-twisted tensor products of {C{$^*$}}-algebras},
  date={2014},
  issn={0129-167X},
  journal={Internat. J. Math.},
  volume={25},
  number={2},
  pages={1450019, 37},
%  doi={10.1142/S0129167X14500190},
%  review={\MR {3189775}},
}

\bib{MRW2016}{article}{
  author={Meyer, Ralf},
  author={Roy, Sutanu},
  author={Woronowicz, Stanis\l aw Lech},
  title={Quantum group-twisted tensor products of {${\rm C}^*$}-algebras. {II}},
  date={2016},
  issn={1661-6952},
  journal={J. Noncommut. Geom.},
  volume={10},
  number={3},
  pages={859\ndash 888},
%  doi={10.4171/JNCG/250},
%  review={\MR {3554838}},
}

  \bib{MRW2017}{article}{
  author={Meyer, Ralf},
  author={Roy, Sutanu},
  author={Woronowicz, Stanis\l aw~Lech},
  title={Semidirect products of {$\rm C^*$}-quantum groups: multiplicative unitaries approach},
  date={2017},
  issn={0010-3616},
  journal={Comm. Math. Phys.},
  volume={351},
  number={1},
  pages={249\ndash 282},
%  doi={10.1007/s00220-016-2727-3},
%  review={\MR {3613504}},
}

\bib{Roy2022}{article}{
    author = {Roy, Sutanu},
    title = {Braided Quantum Groups and Their Bosonizations in the $C^*$-Algebraic Framework},
    journal = {International Mathematics Research Notices},
    year = {2022},
    month = {06},
    pages = {1073-7928},
}

\bib{SW2001}{article}{
	author = {So\l{}tan, Piotr~M.},
	author = {Woronowicz, Stanis\l aw Lech},
	title = {A remark on manageable multiplicative unitaries},
	date = {2001}
	issn = {0377-9017},
	journal = {Lett. Math. Phys.},
	volume = {57},
	number = {3},
	pages = {239--252},
}

\bib{SW2007}{article}{
  author={So\l{}tan, Piotr~M.},
  author={Woronowicz, Stanis\l aw Lech},
  title={From multiplicative unitaries to quantum groups. {II}},
  date={2007},
  issn={0022-1236},
  journal={J. Funct. Anal.},
  volume={252},
  number={1},
  pages={42\ndash 67},
%  doi={10.1016/j.jfa.2007.07.006},
%  review={\MR {2357350}},
}

\bib{NV2010}{article}{
      author={Nest, Ryszard},
      author={Voigt, Christian},
       title={Equivariant {P}oincar\'{e} duality for quantum group actions},
        date={2010},
        ISSN={0022-1236},
     journal={J. Funct. Anal.},
      volume={258},
      number={5},
       pages={1466\ndash 1503},
%         doi={10.1016/j.jfa.2009.10.015},
%      review={\MR{2566309}},
}

\bib{Rad1985}{article}{
	author = {Radford, David E.},
	title = {The structure of {H}opf algebras with a projection},
	journal = {Journal of Algebra},
	volume = {92},
	issn = {0021-8693},
	journal = {J. Algebra},
	number = {2},
	pages = {322--347},
%	doi = {10.1016/0021-8693(85)90124-3},
%	url = {https://doi.org/10.1016/0021-8693(85)90124-3},	
	year = {1985},
}

\bib{RR2021}{article}{
  author = {Rahaman, Atibur},
  author = {Roy, Sutanu},
  title = {Quantum {$E(2)$} groups for complex deformation parameters},
  journal = {Rev. Math. Phys.},
  volume = {33},
  number = {6},
  pages = {2150021},
  year = {2021},
%  doi = {10.1142/S0129055X21500215},
%  URL = {https://doi.org/10.1142/S0129055X21500215}
  }

\bib{VW1996}{article}{
    AUTHOR = {Van Daele, Alfons},
    AUTHOR= {Woronowicz, Stanis\l aw Lech},
     TITLE = {Duality for the quantum {$E(2)$} group},
   JOURNAL = {Pacific J. Math.},
  FJOURNAL = {Pacific Journal of Mathematics},
    VOLUME = {173},
      YEAR = {1996},
    NUMBER = {2},
     PAGES = {375--385},
      ISSN = {0030-8730},
%  MRNUMBER = {1394395},
%MRREVIEWER = {Albert Jeu-Liang Sheu},
%       URL = {http://projecteuclid.org/euclid.pjm/1102365628},
}
  
\bib{W1996}{article}{
  author={Woronowicz, Stanis\l aw Lech},
  title={From multiplicative unitaries to quantum groups},
  date={1996},
  issn={0129-167X},
  journal={Internat. J. Math.},
  volume={7},
  number={1},
  pages={127\ndash 149},
%  doi={10.1142/S0129167X96000086},
%  review={\MR {1369908}},
}  
  
  \bib{W1995}{article}{
  author={Woronowicz, Stanis\l aw Lech},
  title={{$C^*$}-algebras generated by unbounded elements},
  date={1995},
  issn={0129-055X},
  journal={Rev. Math. Phys.},
  volume={7},
  number={3},
  pages={481\ndash 521},
%  doi={10.1142/S0129055X95000207},
%  review={\MR {1326143}},
}

\bib{W1992a}{article}{
  author={Woronowicz, Stanis\l aw Lech},
  title={Operator equalities related to the quantum {$E(2)$} group},
  date={1992},
  issn={0010-3616},
  journal={Comm. Math. Phys.},
  volume={144},
  number={2},
  pages={417\ndash 428},
%  review={\MR {1152380}},
}

%\bib{W1992}{article}{
%  author={Woronowicz, Stanis\l aw Lech},
%  title={Quantum {${\rm SU}(2)$} and {$E(2)$} groups. {C}ontraction procedure},
%  date={1992},
%  issn={0010-3616},
%  journal={Comm. Math. Phys.},
%  volume={149},
%  number={3},
%  pages={637\ndash 652},
%%  review={\MR {1186047}},
%}

\bib{W1991a}{article}{
  author={Woronowicz, Stanis\l aw Lech},
  title={Quantum {$E(2)$} group and its {P}ontryagin dual},
  date={1991},
  issn={0377-9017},
  journal={Lett. Math. Phys.},
  volume={23},
  number={4},
  pages={251\ndash 263},
%  doi={10.1007/BF00398822},
%  review={\MR {1152695}},
}

\bib{W1991b}{article}{
  author={Woronowicz, Stanis\l aw Lech},
  title={Unbounded elements affiliated with {$C^*$}-algebras and noncompact quantum groups},
  date={1991},
  issn={0010-3616},
  journal={Comm. Math. Phys.},
  volume={136},
  number={2},
  pages={399\ndash 432},
%  review={\MR {1096123}},
}

  \end{biblist}
\end{bibdiv}
\end{document}